\documentclass[10pt]{amsart}
\usepackage{amssymb}
\usepackage{amsfonts}
\usepackage{amscd}
\usepackage[dvips]{graphicx}
\usepackage[all,cmtip]{xy}
\usepackage{hyperref}
\usepackage{yhmath}
\usepackage{bbm}
\usepackage{ulem}
\usepackage{mathdots}
\usepackage{leftidx}
\usepackage{mathrsfs}
\usepackage{amsmath,amssymb, amsthm}
\usepackage{color}
\usepackage{tikz}
\usepackage{picture}
\usepackage{enumerate}
\usepackage{makecell}
\usepackage{extarrows}
\usepackage{graphicx}
\usepackage{caption}
\usepackage{subcaption}
\usepackage{float}
\usepackage{epstopdf}
\usepackage{mathtools}
\usepackage{textcase}
\usepackage{wrapfig}
\usepackage{url}
\usepackage{citeref}
\numberwithin{equation}{section}

\DeclareFontFamily{U}{cal}{}
\DeclareFontShape{U}{cal}{m}{n}{<->cmsy10}{}

\DeclareSymbolFont{rcal}{U}{cal}{m}{n}
\DeclareSymbolFontAlphabet{\mathcal}{rcal}

\newtheorem{Them}{Theorem}[section]
\newtheorem{Lem}[Them]{Lemma}
\newtheorem{Def}[Them]{Definition}
\newtheorem{Cor}[Them]{Corollary}
\newtheorem{Prop}[Them]{Proposition}
\newtheorem{Ex}[Them]{Example}
\newtheorem{Rem}[Them]{Remark}

\newcommand{\add}{{\mathsf{add}}}
\newcommand{\Add}{{\mathsf{Add}}}
\newcommand{\Prod}{{\mathsf{Prod}}}

\newcommand{\copd}{{\mathsf{coprod}}}
\newcommand{\Copd}{{\mathsf{Coprod}}}
\newcommand{\End}{{\mathsf{End}}}

\newcommand{\ind}{{\mathsf{ind}}}

\newcommand{\rad}{{\mathsf{rad}}}
\newcommand{\soc}{{\mathsf{soc}}}

\newcommand{\D}{{\mathsf{D}}}

\newcommand{\m}{\mathsf{mod}}
\newcommand{\M}{\mathsf{Mod}}
\newcommand{\stmod}{ \mathsf{\underline{mod}}}
\newcommand{\Stmod}{ \mathsf{\underline{Mod}}}
\newcommand{\Hom}{{\mathsf {Hom}}}
\newcommand{\Coker}{{\mathsf {Coker}}}
\newcommand{\Ker}{{\mathsf {Ker}}}

\newcommand{\StHom}{\mathsf{\underline{Hom}}}

\title[Simple-minded systems and weakly simpleminded systems]{A note on simple-minded systems and weakly simple-minded systems  over self-injective algebras}

\author{Zhen Zhang}

\address{Zhen Zhang
\newline Faculty of Arts and Sciences 
\newline Beijing Normal University 
\newline  Zhuhai 519087
\newline P.R.China}
\email{zhangzhen@bnu.edu.cn}

\date{version of \today}

\newcommand{\sdp}{\times\kern-.2em\vrule height1.1ex depth-.05ex}

 \normalbaselines
\begin{document}
\renewcommand{\thefootnote}{\alph{footnote}}
\setcounter{footnote}{-1} \footnote{\it{Mathematics Subject Classification(2010)}: 16G10, 18G65.}
\renewcommand{\thefootnote}{\alph{footnote}}
\setcounter{footnote}{-1}
\footnote{ \it{Keywords}: simple-minded system, weakly simple-minded system, coherent ring, Brauer graph algebra.}
\setcounter{footnote}{-1}

\begin{abstract}
Let $A$	be a self-injective algebra over an algebraically closed field. We study the relationship between simple-minded systems and weakly simple-minded systems in $A$-$\stmod$. We present a  necessary and sufficient  condition for an orthogonal system to be a simple-minded system over domestic Brauer graph algebras. As a byproduct, we construct a class of simple-minded systems over 2-domestic Brauer graph algebras. 
\end{abstract}

\maketitle
\section{Introduction}
Simple-minded systems, introduced by Koenig and Liu \cite{KL}, are families  of objects that satisfy orthogonality and generating conditions in the stable module category of any artin algebra.  Dugas \cite{Dugas} defined simple-minded systems in any Hom-finite Krull-Schmidt triangulated category. The authors \cite{KL,CKL,CLZ,GLYZ,Z} studied various properties of simple-minded systems, including the finiteness of their cardinality, their invariance under stable equivalences,  a characterization of simple-minded systems over a  representation-finite self-injective algebra, and the location of the members of a simple-minded system in a quasi-tube over a self-injective algebra, among others.  
	
Weakly simple-minded systems  were  introduced by Koenig and Liu \cite{KL} for  comparison with simple-minded systems. They showed that a family of objects $\mathcal{S}$ in $A$-$\stmod$ is a simple-minded system if and only if  $\mathcal{S}$ is a weakly simple-minded system over a representation-finite self-injective algebra $A$.   However,  there exists a weakly simple-minded system that is not a simple-minded system in the stable module category of a representation-infinite  self-injective algebra; see \cite{Z}.  
In this paper, we present a sufficient condition for a weakly simple-minded system to be a simple-minded system over a self-injective algebra.

\begin{Them}{\rm(Theorem  \ref{wsms and sms}}{\rm)}\label{wsms and sms-0}
Let $A$	be a  self-injective algebra and let $\mathcal{S}$ be a finite  orthogonal system such that  $\Sigma(\mathcal{S})\subseteq\mathcal{F}(\mathcal{S})$ in $A$-$\stmod$. If $\mathcal{S}$  is a weakly simple-minded system, that is, ${\mathcal{S}^{\perp}}=\{0\}$ in $A$-$\stmod$ and the condition {\rm(}\ref{small-module-large-module}{\rm)}  holds, then $\mathcal{S}$ is a simple-minded system in $A$-$\stmod$.   
\end{Them}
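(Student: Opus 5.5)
The plan is to prove the generating condition through a torsion-pair (aisle) argument, with $\mathcal{F}(\mathcal{S})$ playing the role of the aisle. Recall that a finite orthogonal system is a simple-minded system exactly when every object of $A$-$\stmod$ is obtained from $\mathcal{S}$ by finitely many extensions and shifts. The hypothesis $\Sigma(\mathcal{S})\subseteq\mathcal{F}(\mathcal{S})$ gives $\Sigma(\mathcal{F}(\mathcal{S}))\subseteq\mathcal{F}(\mathcal{S})$, since $\Sigma$ is a triangle functor and $\mathcal{F}(\mathcal{S})$ is the extension closure of $\mathcal{S}$. So $\mathcal{F}(\mathcal{S})$ is extension-closed and closed under $\Sigma$. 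It therefore suffices to show that $\mathcal{F}(\mathcal{S})$ is all of $A$-$\stmod$.

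\textbf{Step 1 (aisle via contravariant finiteness).} I would show that $\mathcal{F}(\mathcal{S})$ is contravariantly finite in $A$-$\stmod$. Granting this, the standard argument for extension-closed, $\Sigma$-closed, contravariantly finite subcategories applies (minimal right approximation plus Wakamatsu's lemma in triangulated form). It shows that $\mathcal{F}(\mathcal{S})$ is an aisle, i.e.
\[
A\text{-}\stmod=\mathcal{F}(\mathcal{S})*\mathcal{F}(\mathcal{S})^{\perp}.
\]

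\textbf{Step 2 (conclusion).} The identity $\mathcal{F}(\mathcal{S})^{\perp}=\mathcal{S}^{\perp}$ holds because $\mathcal{S}^{\perp}$ is closed under the extensions that build $\mathcal{F}(\mathcal{S})$. Since $\mathcal{S}^{\perp}=\{0\}$ by hypothesis, every object lies in $\mathcal{F}(\mathcal{S})$, and $\mathcal{S}$ is a simple-minded system.

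\textbf{Main obstacle: Step 1.} Contravariant finiteness of the extension closure of a finite orthogonal system is not automatic in a representation-infinite stable category, and this is where condition (\ref{small-module-large-module}) has to be used. For $X\in A$-$\stmod$, I would build an iterated sequence of right $\mathcal{S}$-approximations. Take $S_0\to X$ a right $\add\mathcal{S}$-approximation with cone $X_1$. Then pass to a right approximation of $\Sigma^{-1}$-shifts, or equivalently push the cokernel part, and iterate. This yields objects $F_n\in\mathcal{F}(\mathcal{S})$ with maps $F_n\to X$ whose cones $C_n$ satisfy: every map from $\mathcal{S}$ to $C_n$ factors through $C_{n+1}$ with lower ``$\mathcal{S}$-length''.

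Passing to the large stable category $A$-$\Stmod$, I would take the homotopy colimit of the $F_n$ and of the $C_n$. I would then check that the limiting cone $C_\infty$ satisfies $\StHom(S,C_\infty)=0$ for all $S\in\mathcal{S}$. This uses compactness of the finitely generated $S$ in $A$-$\Stmod$. Condition (\ref{small-module-large-module}), namely that the right perpendicular of $\mathcal{S}$ vanishes already in the large category, then forces $C_\infty=0$, so $X$ is the homotopy colimit of the $F_n$.

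Finally, since $X$ is finitely generated and hence compact, the identity of $X$ factors through some $F_n$. Thus $X$ is a direct summand of an object of $\mathcal{F}(\mathcal{S})$. I would then use the Krull--Schmidt property together with the fact that $\mathcal{F}(\mathcal{S})$ is closed under direct summands; for a finite orthogonal system this should follow from the $\mathcal{S}$-length/filtration argument. This gives $X\in\mathcal{F}(\mathcal{S})$ directly, which bypasses Step 1 if the approximation route proves awkward.
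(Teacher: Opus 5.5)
Your proof is correct. Only its last paragraph does the real work, and there you take a more hands-on route than the paper.

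\textbf{The paper's route.} It lifts $\Sigma(\mathcal{F}(\mathcal{S}))\subseteq\mathcal{F}(\mathcal{S})$ to $\Sigma(\Copd(\mathcal{S}))\subseteq\Copd(\mathcal{S})$. It then quotes Canonaco--Neeman--Stellari to get that $(\Copd(\mathcal{S}),\mathcal{S}^{\perp})$ is a t-structure on $A$-$\Stmod$. Using (\ref{small-module-large-module}) it concludes $\Copd(\mathcal{S})=A$-$\Stmod$. Finally it intersects with $A$-$\stmod$, asserting $\Copd(\mathcal{S})\cap A$-$\stmod=\copd(\mathcal{S})=\mathcal{F}(\mathcal{S})$.

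\textbf{Your route.} You never form $\Copd(\mathcal{S})$ or an aisle. You build an approximation tower inside $A$-$\stmod$ and use compactness of the objects of $\mathcal{S}$ together with (\ref{small-module-large-module}) to kill its homotopy colimit. Compactness of $X$ then brings you back to a finite stage, and Remark \ref{F-S-properties} handles the direct summand.

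\textbf{Comparison.} Your argument is the content of the intersection step that the paper calls clear. A compact object of $\Copd(\mathcal{S})$ is a priori only a direct summand of an object of $\copd(\mathcal{S})$, and here that is proved by exactly your compactness argument. So your route makes this step explicit.

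Your route also buys something. The paper uses $\Sigma(\mathcal{S})\subseteq\mathcal{F}(\mathcal{S})$ only to invoke the t-structure theorem, and your argument never uses it. So finiteness, orthogonality, $\mathcal{S}^{\perp}=\{0\}$ and (\ref{small-module-large-module}) already suffice. This is also what Corollary \ref{wsms and sms-1} tacitly needs, since its proof does not check the $\Sigma$-condition.

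Steps 1 and 2 can be dropped. You never establish contravariant finiteness independently of the final argument.

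\textbf{Cleaning up the tower.} No $\Sigma^{-1}$-shifts or ``$\mathcal{S}$-length'' are involved, and maps $S\to C_n$ do not ``factor through $C_{n+1}$''. The clean version is as follows.
\begin{itemize}
\item Put $X_0=X$. Let $P_n\to X_n$ be a right $\add\mathcal{S}$-approximation; it is a finite direct sum because $\mathcal{S}$ is finite and $A$-$\stmod$ is Hom-finite. Let $X_{n+1}$ be its cone.
\item Every map $S\to X_n$ with $S\in\mathcal{S}$ becomes zero in $X_{n+1}$.
\item By the octahedral axiom, the fibre $F_n$ of $X\to X_n$ lies in $\{F_{n-1}\}\star\{P_{n-1}\}\subseteq\mathcal{F}(\mathcal{S})$.
\item Let $X_\infty=\operatorname{hocolim}X_n$ in $A$-$\Stmod$. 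Then $\StHom_A(S,X_\infty)\cong\varinjlim\StHom_A(S,X_n)=0$, so $X_\infty=0$ by (\ref{small-module-large-module}).
\item Hence $\varinjlim\StHom_A(X,X_n)=0$, which forces $X\to X_n$ to vanish for some $n$. Therefore $F_n\cong X\oplus X_n[-1]$.
\end{itemize}
This avoids taking a homotopy colimit of the triangles $F_n\to X\to C_n$, which is not canonical.

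Finally, the generating condition here is $\mathcal{F}(\mathcal{S})=A$-$\stmod$, with extensions only and no shifts. You do aim at exactly that target, so this is only a misstatement in your opening sentence.
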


We present the following  necessary and sufficient condition for an orthogonal system to be a simple-minded system over domestic Brauer graph algebras, and we construct a class of simple-minded systems over 2-domestic Brauer graph algebras.
	
\begin{Them}{\rm(Theorem  \ref{BGA-sms}}{\rm)} \label{main-them-2}
Let $A$ be a domestic Brauer graph algebra and  $\mathcal{S}$ an  orthogonal system in $A$-$\stmod$.  Then $\mathcal{S}$ is a simple-minded system  in  $A$-$\stmod$ if and only if  $\mathcal{S}$ satisfies the following conditions:
\begin{enumerate}[$(1)$]
	\item $\mathcal{S}$ contains at least one non-periodic $A$-module, 		
	\item $\Omega(\mathcal{S})$ is  contained in  $\mathcal{F}(\mathcal{S})$.
\end{enumerate}
\end{Them}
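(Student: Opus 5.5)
We prove the two implications separately. Two things make this work. First, Theorem \ref{wsms and sms-0}: for a finite orthogonal system with $\Sigma(\mathcal{S})\subseteq\mathcal{F}(\mathcal{S})$, where $\Sigma=\Omega^{-1}$ in $A$-$\stmod$, being a weakly simple-minded system already implies being a simple-minded system. Second, the known structure of the stable Auslander--Reiten quiver of a domestic Brauer graph algebra. Its components are finitely many Euclidean components $\mathbb{Z}\tilde{A}_{p,q}$, which contain the non-periodic modules, together with infinitely many homogeneous tubes and finitely many exceptional tubes.

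\emph{Necessity.} Let $\mathcal{S}$ be a simple-minded system. Then $\mathcal{F}(\mathcal{S})$ is the whole stable category, so $\Omega(\mathcal{S})\subseteq\mathcal{F}(\mathcal{S})$ holds trivially. For (1), suppose every member of $\mathcal{S}$ is $\Omega$-periodic. By \cite{KL,CKL}, $\mathcal{S}$ is finite. Periodic modules lie in tubes, and a module in a tube has a filtration whose factors lie in the same tube. So $\mathcal{F}(\mathcal{S})$ only contains modules lying in the union of finitely many tubes, up to extensions inside the triangulated closure. More precisely, I would argue with $\Hom$-vanishing: take a non-periodic indecomposable $X$ in a $\mathbb{Z}\tilde{A}_{p,q}$ component. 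Using the known description of $\StHom$ from tubes to Euclidean components (or the orthogonality between distinct components in domestic Brauer graph algebras), find a suitable $\Omega$-shift $\Omega^{n}X$ with $\StHom(S,\Omega^{n}X)=0$ for all $S\in\mathcal{S}$. This contradicts $\mathcal{S}^{\perp}=\{0\}$. The finiteness of $\mathcal{S}$ and of the period let one choose $n$ uniformly. I expect this step to need the most component-specific input.

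\emph{Sufficiency.} Assume (1) and (2). \textbf{Step 1:} $\mathcal{S}$ is finite. Orthogonality together with the bounded number of non-homogeneous components should bound $|\mathcal{S}|$ by the number of simples, as in the known cardinality results. \textbf{Step 2:} $\Sigma(\mathcal{S})\subseteq\mathcal{F}(\mathcal{S})$. Apply $\Sigma$ to (2) to get $\mathcal{S}\subseteq\mathcal{F}(\Sigma\mathcal{S})$. A length or dimension argument on filtrations, with finiteness of $\mathcal{S}$, shows that $\mathcal{F}(\mathcal{S})$ and $\mathcal{F}(\Omega\mathcal{S})$ coincide. This uses that $\Omega$ permutes the finitely many relevant objects up to filtration. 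Then $\Sigma\mathcal{S}\subseteq\mathcal{F}(\mathcal{S})$. \textbf{Step 3:} prove $\mathcal{S}^{\perp}=\{0\}$. Let $0\neq X\in\mathcal{S}^{\perp}$ be indecomposable. Since $\mathcal{F}(\mathcal{S})$ is closed under $\Omega^{\pm1}$, each $\Omega^{n}X$ is also in $\mathcal{S}^{\perp}$. Using the non-periodic $S_0\in\mathcal{S}$, whose $\Omega$-orbit meets a Euclidean component in every relevant $\tau$-orbit, derive a nonzero stable homomorphism $S\to X$ with $S\in\mathcal{S}$. This rests on the fact that in domestic Brauer graph algebras every indecomposable non-projective module admits nonzero stable maps from modules in the Euclidean components in a cofinal set of $\tau$-shifts. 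This is the main obstacle, and I would first try to handle it via the string combinatorics of the Brauer graph.

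\textbf{Step 4:} verify condition (\ref{small-module-large-module}). I would get it from the weak-simple-mindedness criterion, which for finitely many modules with $\Omega$-stable filtration closure reduces to Steps 2--3. Then Theorem \ref{wsms and sms-0} yields that $\mathcal{S}$ is a simple-minded system.
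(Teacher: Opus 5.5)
Your sufficiency argument has a genuine gap, and it comes from routing it through Theorem~\ref{wsms and sms-0}.

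\textbf{Step~4 is circular.} It does not follow from Steps~2--3. Condition (\ref{small-module-large-module}) is about arbitrary, possibly infinitely generated, modules (Pr\"ufer, adic, generic), while Steps~2--3 only concern objects of $A$-$\stmod$. Worse, once $\Omega(\mathcal{S})$ and $\Sigma(\mathcal{S})$ lie in $\mathcal{F}(\mathcal{S})$, the subcategory $\mathcal{F}(\mathcal{S})$ is closed under $\Omega^{\pm1}$, extensions and direct summands (Remark~\ref{F-S-properties}). So it is a thick subcategory of $A$-$\stmod$, which is the category of compact objects of the compactly generated category $A$-$\Stmod$. By Neeman's theorem, $\mathcal{S}^{\perp}=\mathcal{F}(\mathcal{S})^{\perp}$ vanishes in $A$-$\Stmod$ if and only if $\mathcal{F}(\mathcal{S})=A$-$\stmod$. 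So the conclusion of (\ref{small-module-large-module}) is equivalent to the statement you are trying to prove.

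\textbf{The other steps are not established either.}
Step~2's ``length or dimension argument'' has no invariant to run on. Dimension is not additive on triangles in $A$-$\stmod$, and an inclusion $\Omega\mathcal{F}(\mathcal{S})\subseteq\mathcal{F}(\mathcal{S})$ of infinite classes does not force equality. Step~1 appeals to cardinality results that concern simple-minded systems, not arbitrary orthogonal systems. Step~3 leaves its key Hom-nonvanishing claim unproved. For the necessity of (1), neither the claim that $\mathcal{F}(\mathcal{S})$ stays inside finitely many tubes nor the uniform choice of $\Omega^{n}X$ is justified.

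\textbf{What the paper does instead.} Its argument never leaves $A$-$\stmod$.
A Brauer graph algebra is symmetric, so $\nu\cong\mathrm{id}$ and $\mathcal{S}$ is automatically Nakayama-stable. Lemma~\ref{omega-self-injective-1} then gives $\Sigma(\mathcal{S})\subseteq\mathcal{F}(\mathcal{S})$ by Serre duality rather than a length count. The isomorphism $\StHom_A(S_i,\Omega(S_j))\cong\D\StHom_A(S_j,\nu(S_i))$ forces the first factor of a shortest $\mathcal{S}$-filtration of $\Omega(S_j)$ to be $\nu^{-1}(S_j)$. Rotating that triangle puts $\Sigma(\nu^{-1}(S_j))$ into $\mathcal{F}(\mathcal{S})$.

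Since $\tau\cong\Omega^{2}$, one gets $\tau^{i}(\mathcal{S})\subseteq\mathcal{F}(\mathcal{S})$ for all $i$. Take the non-periodic $M\in\mathcal{S}$ together with $\Omega(M)$, which lies in the other Euclidean component in the 2-domestic case (Remark~\ref{Euclidean-comp}). Closure under extensions and summands along almost split triangles then fills the Euclidean components.

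Cones of irreducible maps inside these components lie in $\mathcal{F}(\mathcal{S})$, and they exhaust the quasi-simples of the exceptional tubes (Proposition~\ref{s.m.s.-maximal-universal-module}, \cite[Lemma 3.8]{Z}). Hence $\mathcal{F}(\mathcal{S})$ contains every non-homogeneous component. It therefore contains every simple module, since none lies in a homogeneous tube (Theorem~\ref{simple-module-and-n-tube}), and so it is everything.

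The necessity of (1) is immediate from Theorem~\ref{sufficient-condition-of-s.m.s.}: a simple-minded system meets each Euclidean component, and objects there are non-periodic.
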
	

\begin{Them}{\rm(Theorem  \ref{BGA-sms-con}}{\rm)} \label{main-them-3}
Let $A$ be a 2-domestic Brauer algebra with  Brauer graph $G$ such that $G$ has a unique cycle of even length and the number of {\rm(}additional{\rm)} edges on the inside of the cycle equals number of {\rm(}additional{\rm)}  edges on the outside of the cycle.  Let $M$ be a non-zero non-periodic indecomposable $A$-module. Then the family of objects $\mathcal{S}$ constructed in Subsection 4.2 is a simple-minded system in $A$-$\stmod$.
\end{Them}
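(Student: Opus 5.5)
The proof will go through the criterion of Theorem \ref{main-them-2}. A $2$-domestic Brauer graph algebra is domestic, so it suffices to check three things about the family $\mathcal{S}$ built in Subsection 4.2: $\mathcal{S}$ is an orthogonal system in $A$-$\stmod$, $\mathcal{S}$ contains a non-periodic module, and $\Omega(\mathcal{S})\subseteq\mathcal{F}(\mathcal{S})$.

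The second condition is immediate, provided the construction puts the given non-periodic module $M$ (or an $\Omega$-shift of it) into $\mathcal{S}$. So the work is in orthogonality and in $\Omega$-closure. For both I will use the known shape of the stable Auslander--Reiten quiver of a $2$-domestic Brauer graph algebra whose graph has a unique cycle of even length. It has two non-periodic components of type $\mathbb{Z}\tilde{A}_{p,q}$, and the numbers $p,q$ are determined by the edges inside and outside the cycle. The hypothesis that these two numbers agree gives $p=q$, and I expect this symmetry is exactly what makes the construction balanced. There are also finitely many exceptional tubes, together with homogeneous tubes.

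Since $A$ is special biserial, every indecomposable non-projective module is a string or band module. I therefore plan to compute the stable Hom-spaces between members of $\mathcal{S}$ by the Crawley-Boevey / Krause description of homomorphisms between string modules, namely factor strings and substrings. Alternatively one can use hammocks in the $\mathbb{Z}\tilde{A}_{p,p}$ components.

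\textbf{Orthogonality.} I will show two things.
\begin{enumerate}[$(1)$]
\item $\StHom(S,S)$ is one-dimensional for each $S\in\mathcal{S}$.
\item $\StHom(S,S')=0$ for distinct $S,S'\in\mathcal{S}$.
\end{enumerate}
I will split into cases according to where $S$ and $S'$ lie: both in the non-periodic components, one in a tube, or both in tubes. For a morphism that is nonzero in $A$-$\m$, I will check that it factors through a projective. In a Brauer graph algebra this is detected by whether the corresponding string map passes through a uniserial summand of a projective $P_e=$ (the two uniserials attached at the edge $e$).

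\textbf{Closure under $\Omega$.} For each $S\in\mathcal{S}$, I will exhibit $\Omega(S)$ as an iterated extension of members of $\mathcal{S}$. Concretely, I will write the projective cover $P(S)\to S$. The kernel $\Omega(S)$ is again a string module, obtained by the usual "hooks and cohooks" rule. I will then filter that string into substrings, each of which is a member of $\mathcal{S}$, and realise the extensions by short exact sequences, hence by triangles in $A$-$\stmod$. Since $\Omega$ acts on the $\mathbb{Z}\tilde{A}_{p,p}$ components by a translation, and on tubes by permuting mouth modules, I expect the construction in Subsection 4.2 to be $\Omega$-stable up to filtration. It should essentially consist of $M$ together with the mouth modules of the exceptional tubes determined by the cycle.

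The main obstacle is this $\Omega$-closure step, specifically for the non-periodic member $M$. The module $\Omega(M)$ lies in a $\mathbb{Z}\tilde{A}_{p,p}$ component and is long, so one must show that its string decomposes into pieces lying in $\mathcal{S}$ with the correct extension order. Here I would use $p=q$: going around the even cycle contributes equally many inside and outside edge modules. I would first test the claim on the smallest case, a graph consisting of a cycle of length $2$ with one additional edge on each side, and then induct on the number of additional edges.
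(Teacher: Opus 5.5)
Your outer reduction is the same as the paper's. The paper builds $\mathcal{S}$, checks that it is orthogonal, that $M\in\mathcal{S}$, and that $\Omega(\mathcal{S})\subseteq\mathcal{F}(\mathcal{S})$, and then applies Theorem \ref{BGA-sms}. The gap is the $\Omega$-closure step. You leave it as an expectation, and it rests on a wrong guess about $\mathcal{S}$.

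The family of Subsection 4.2 contains no tube modules. Every member lies in ${}_s\Gamma_1\cup{}_s\Gamma_2$, and $\mathcal{S}$ is generated from $M$ by one explicit operation, defined as follows.
\begin{itemize}
\item Because $A$ is symmetric, Serre duality gives $\StHom_A(X,X)\cong\D\StHom_A(X,\Omega X)\neq 0$. Choose $0\neq\alpha\colon X\to\Omega X$ and complete it to a triangle $X\xrightarrow{\alpha}\Omega X\to W_X\to X[1]$.
\item Two left rotations, together with $\tau=\Omega^2$, give $\tau X\to W_X[-1]\to X\xrightarrow{\pm\alpha}\tau X[1]$. Its connecting morphism is nonzero, so by Theorem \ref{omega-self-injective} it is the almost split triangle ending at $X$.
\item Hence $W_X$ is the $[1]$-shift of the AR middle term at $X$. $\mathcal{S}$ is $\{M\}$ closed under $X\mapsto\{\text{indecomposable summands of }W_X\}$, and $p=q$ is what makes this process terminate.
\end{itemize}
With this description, $\Omega$-closure needs no string combinatorics: $\Omega X\in\{X\}\star\{W_X\}\subseteq(\mathcal{S})_3$ for every $X\in\mathcal{S}$.

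Your plan is to compute $\Omega(S)$ by hooks and cohooks, look for a filtration by members of $\mathcal{S}$, test the smallest graph, and induct on the number of additional edges. It gives no induction step. It also gives no reason such a filtration should exist for an arbitrary non-periodic $M$ in a $\mathbb{Z}\tilde A_{p,p}$ component. The missing idea is exactly this defining triangle.

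The same misidentification affects orthogonality. Your cases with a member in a tube are vacuous, and the paper never computes string homomorphisms. It uses three facts:
\begin{itemize}
\item ${}_s\Gamma_1$ and ${}_s\Gamma_2$ are stable generalized standard, so $\StHom_A(X,Y)=0$ whenever there is no path of irreducible maps from $X$ to $Y$.
\item $\Omega({}_s\Gamma_1)={}_s\Gamma_2$ (Remark \ref{Euclidean-comp}).
\item Serre duality $\StHom_A(X,Y)\cong\D\StHom_A(Y,\Omega X)$ moves both arguments into one component.
\end{itemize}
For example, $\StHom_A(M,W_{11})\cong\D\StHom_A(W_{11}[-1],\tau M)=0$ and $\StHom_A(W_{11},M)\cong\D\StHom_A(M,W_{11}[-1])=0$. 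This holds because $W_{11}[-1]$ is a middle term of the almost split triangle $\tau M\to W_{11}[-1]\oplus W_{12}[-1]\to M\to\tau M[1]$. Your hammock alternative points in this direction.

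Note also that $\Omega$ does not act on a $\mathbb{Z}\tilde A_{p,p}$ component by a translation. It interchanges the two Euclidean components, and only $\Omega^2=\tau$ is the translation. This alternation between components is precisely what the construction exploits.
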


This paper is organized as follows. In Section 2, we recall simple-minded system, and related definitions and conclusions. In Section 3, we prove Theorem \ref{wsms and sms} and Theorem \ref{BGA-sms}. In Section 4, we present an explicit construction of  simple-minded systems over a 2-domestic Brauer graph algebra, see Theorem \ref{BGA-sms-con}.

\section{Preliminaries}	
All rings are assumed to be associative with identity and all algebras  are considered to be finite-dimensional  over an algebraically closed field $k$. For a ring $R$, we denote by $R$-$\m$ the category of finitely presented left $R$-modules and  $R$-$\M$ the category of all left $R$-modules. The stable module category of $R$-$\m$ (resp. $R$-$\M$) is denoted by $R$-$\stmod$ (resp.  $R$-$\Stmod$), it has the same class of objects with $R$-$\m$ (resp.  $R$-$\M$), and for two objects $X,Y$ in $R$-$\stmod$ (resp. $R$-$\Stmod$), the abelian group   $\StHom_R(X,Y)$ from $X$ to $Y$ is the quotient  $\StHom_R(X,Y)/\mathcal{P}(X,Y)$, where $\mathcal{P}(X,Y)$ is the subgroup of $\StHom_R(X,Y)$ consisting of all $R$-module homomorphisms  which factor through a projective  $R$-module.  We denote by $\Omega$ (resp. $\Sigma$) the syzygy functor (resp. cosyzygy functor) which assigns to any object $M$ of $R$-$\stmod$ the kernel of its projective cover $P_{R}(M)\twoheadrightarrow M$ (resp. cokernel of its injective envelope $M\hookrightarrow I_{R}(M)$) in $R$-$\m$. 
	
Let $\mathcal{T}$ be an additive category with  small products and coproducts. Let  $\mathcal{A}$ be a full subcategory of $\mathcal{T}$. $\copd(\mathcal{A})$ is the smallest full subcategory $\mathcal{S}\subseteq\mathcal{T}$ containing $\mathcal{A}$ which is closed under finite direct sums and  extensions.  $\Copd(\mathcal{A})$ is the smallest full subcategory $\mathcal{S}\subseteq\mathcal{T}$ containing $\mathcal{A}$ which is closed under  small coproducts and  extensions. $\Prod(M)$ is all direct summands of arbitrary direct sums of modules from $M$.  Let  $M$ be an object of $\mathcal{T}$.  We  denote by $\add(M)$ (resp. $\Add(M)$) all direct summands of finite (resp. arbitrary) direct sums of modules from $M$. Note that, for a class of objects $\mathcal{M}=\{M_{i}\mid i\in I\}$ in  $\mathcal{T}$, we simply denote $\add(\bigoplus_{i\in I}M_{i})$ by $\add(\mathcal{M})$.

\subsection{Simple-minded systems}
Let $\mathcal{T}$ be a  triangulated category with the suspension functor $\Sigma $.   For any families $\mathcal{S}_{1}, \mathcal{S}_{2}$ of objects in $\mathcal{T}$, we define a family of objects
\[\mathcal{S}_{1}\star\mathcal{S}_{2}:=\{ X\in \mathcal{T}\mid \mbox{ There is a  triangle }S_{1} \longrightarrow  X \longrightarrow S_{2} \longrightarrow \Sigma S_{1}, where \ S_{1}\in \mathcal{S}_{1}, S_{2}\in \mathcal{S}_{2}\}. \]
	
Using the octahedral axiom,  the operator $\star$ is associative, that is,  $(\mathcal{S}_{1}\star\mathcal{S}_{2})\star\mathcal{S}_{3}=\mathcal{S}_{1}\star(\mathcal{S}_{2}\star\mathcal{S}_{3})$ for $\mathcal{S}_{1}, \mathcal{S}_{2}$ and $\mathcal{S}_{3}\subseteq \mathcal{T}$.  We denote $(\mathcal{S})_{0}=\{0\}$, and for $n\in\mathbb{Z}^{+}$, we inductively define $(\mathcal{S})_{n}=(\mathcal{S})_{n-1}\star(\mathcal{S}\cup\{0\})$. We have $(\mathcal{S})_{n}\star(\mathcal{S})_{m}=(\mathcal{S})_{n+m}$ for any non-negative integers $m$ and $n$. Similarly, one can define $ _{n}(\mathcal{S})$, and we have $(\mathcal{S})_{n}$=$_{n}(\mathcal{S})$.  We say that $\mathcal{S}$ is {\it extension-closed}, if $\mathcal{S}\star\mathcal{S}\subseteq \mathcal{S}$. One denotes the {\bf extension closure} of a family $\mathcal{S}$ of objects in $\mathcal{T}$ as $$\mathcal{F}(\mathcal{S}):=\bigcup_{n\geq0}(\mathcal{S})_{n},$$ which is the smallest extension closed full subcategory of  $\mathcal{T}$  containing $\mathcal{S}$.
	
For any family $\mathcal{S}$ of objects in $\mathcal{T}$, we set 
$$\mathcal{S^{\perp}}:=\{Y\in \mathcal{T} \mid\mathcal{T}(X,Y)=0, \forall X\in \mathcal{S}\},$$
$$\mathcal{^{\perp}S}:=\{Y\in \mathcal{T}\mid\mathcal{T}(Y,X)=0, \forall X\in \mathcal{S}\}.$$ 
We know that both $\mathcal{S^{\perp}}$ and $\mathcal{^{\perp}S}$ are extension closed subcategories of $\mathcal{T}$  as well as closed under direct summands. We shall denote $\mathcal{S^{\perp}}\cap\mathcal{^{\perp}S}$ by $\mathcal{^{\perp}S^{\perp}}$, which is called a {\it stable bi-perpendicular category}. Note that $(\mathcal{^{\perp}S})^{\perp}$ (resp. $^{\perp}(\mathcal{S}^{\perp})$) has a different meaning with stable bi-perpendicular category $\mathcal{^{\perp}S^{\perp}}$, $\mathcal{S}$ is contained in $(\mathcal{^{\perp}S})^{\perp}$ (resp. $^{\perp}(\mathcal{S}^{\perp})$), but not contained in $\mathcal{^{\perp}S^{\perp}}$.

\begin{Def}\label{brick-orthogonal-system}
Let $\mathcal{T}$ be an additive $k$-category.  An object $M$ in $\mathcal{T}$ is a {\bf stable brick} if $\mathcal{T}(M,M)\cong k$.   Moreover, a family $\mathcal{S}$ of stable bricks in $\mathcal{T}$  is an {\bf orthogonal system} if $\mathcal{T}(M,N)=0$ for all distinct  $M, N$ in $\mathcal{S}$.
\end{Def}
	
\begin{Rem}\label{F-S-properties}
If $\mathcal{S}$  is an orthogonal system, then $\mathcal{F}(\mathcal{S})$ is  closed under direct summands. The reader may refer to \cite[Lemma 2.7]{Dugas} for more details.
\end{Rem}

\begin{Def}\label{torsion-pair} 
Let $\mathcal{T}$ be a  triangulated category and let $\mathcal{X}, \mathcal{Y}$ be two additive subcategories of $\mathcal{T}$ which are closed under direct summands and isomorphisms. The pair $(\mathcal{X},\mathcal{Y})$ is called a {\bf torsion pair}, if 
\begin{enumerate}[$(1)$]
	\item $\mathcal{T}(\mathcal{X},\mathcal{Y})=0$.
	\item For any $C\in\mathcal{T}$, there exists a  triangle		
			
	\[\xymatrix{ X\ar[r]& C\ar[r] &  Y\ar[r]& \Sigma(X)}\]
with $X\in\mathcal{X}$ and $Y\in\mathcal{Y}$.
\end{enumerate}
\end{Def}
	
\begin{Rem}\label{homogeneous-tube-tir}
The torsion pair in Definition \ref{torsion-pair} is different from the torsion pair in \cite[Ch. II, Sec. 3, Def.  3.1]{BR}, the condition $\Sigma(\mathcal{X})\subseteq\mathcal{X}$ is not included in our definition. 
\end{Rem}

\begin{Def}\label{definition-sms-right-tir} {\rm(\cite[Definition 2.1]{KL}, \cite[Definition 2.4, 2.5]{Dugas})} 
Let $\mathcal{T}$ be a  triangulated category.  A family of objects $\mathcal{S}$ in $\mathcal{T}$ is a {\bf simple-minded system} if the following two conditions are satisfied$\colon$
\begin{enumerate}[$(1)$]
	\item {\rm(Orthogonality)} $\mathcal{S}$ is an orthogonal system in $\mathcal{T}$. 
	\item {\rm(Generating condition)} Extension closure $\mathcal{F}(\mathcal{S})$ of $\mathcal{S}$ is equal to $\mathcal{T}$.
\end{enumerate}
\end{Def}

\begin{Def}\label{definition-wsms-right-tir} {\rm(\cite[Definition 5.3]{KL})} 
Let $\mathcal{T}$ be a triangulated category.  A family of objects $\mathcal{S}$ in $\mathcal{T}$ is a {\bf weakly simple-minded system}  if the following two conditions are satisfied$\colon$
\begin{enumerate}[$(1)$]
	\item {\rm(Orthogonality)} $\mathcal{S}$ is an orthogonal system in $\mathcal{T}$. 
	\item {\rm(Weakly generating condition)} For any non-zero object $X\in\mathcal{T}$, there is an object $S$ in $\mathcal{S}$ such that $\mathcal{T}(S,X)\ncong 0.$
\end{enumerate}
\end{Def}
	
\begin{Rem}\label{wsms-sms}
\begin{enumerate}[$(1)$]
	\item It follows from Koenig-Liu \cite{KL}  that simple-minded systems coincide with weakly simple-minded systems in the stable module category of any representation-finite self-injective algebra. 
	\item We \cite{Z} present an example that there is a weakly simple-minded system  which is not a simple-minded system in the stable module category of a $2$-domestic Brauer graph algebra. 
\end{enumerate}
\end{Rem}
	
We state two necessary conditions of simple-minded systems as follows.
\begin{Them}$($\cite[Theorem 3.3]{Dugas}$)$\label{subset-of-sms}
Let $\mathcal{T}$ be a Hom-finite Krull-Schmidt triangulated category. Suppose $\mathcal{X}\subseteq\mathcal{S}$ for a simple-minded system $\mathcal{S}$ in $\mathcal{T}$. Then $(^{\perp}\mathcal{X},\mathcal{F}(\mathcal{X}))$ and $(\mathcal{F}(\mathcal{X}),\mathcal{X}^{\perp})$ are torsion pairs in $\mathcal{T}$.  In particular, $ \mathcal{F}(\mathcal{X})$ is a functorially finite subcategory of $\mathcal{T}$.
\end{Them}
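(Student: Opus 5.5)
The plan is to verify the two axioms of Definition \ref{torsion-pair} for each pair. I treat $(\mathcal{F}(\mathcal{X}),\mathcal{X}^{\perp})$ in detail; the pair $(^{\perp}\mathcal{X},\mathcal{F}(\mathcal{X}))$ is the dual argument, with all triangles and Hom-functors reversed. Functorial finiteness of $\mathcal{F}(\mathcal{X})$ then follows formally. The approximation triangles $X_C\to C\to Y_C$ give right $\mathcal{F}(\mathcal{X})$-approximations, because $\mathcal{T}(\mathcal{F}(\mathcal{X}),Y_C)=0$. The triangles $X'\to C\to Y'$ coming from the other pair give left approximations dually.

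\emph{Closure and Hom-vanishing.} $\mathcal{X}$ is a subfamily of the orthogonal system $\mathcal{S}$, so it is itself an orthogonal system. By Remark \ref{F-S-properties}, $\mathcal{F}(\mathcal{X})$ is therefore closed under direct summands. The perpendicular categories $\mathcal{X}^{\perp}$ and $^{\perp}\mathcal{X}$ are closed under summands and extensions, as noted in the preliminaries. For the vanishing $\mathcal{T}(\mathcal{F}(\mathcal{X}),\mathcal{X}^{\perp})=0$, I induct on $n$ with $Z\in(\mathcal{X})_n$. Such a $Z$ fits into a triangle $Z'\to Z\to X\to\Sigma Z'$ with $Z'\in(\mathcal{X})_{n-1}$ and $X\in\mathcal{X}\cup\{0\}$. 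Applying $\mathcal{T}(-,Y)$ with $Y\in\mathcal{X}^{\perp}$ gives an exact sequence $\mathcal{T}(X,Y)\to\mathcal{T}(Z,Y)\to\mathcal{T}(Z',Y)$ whose outer terms vanish, so the middle term vanishes. The same argument, using $\mathcal{T}(Y,-)$, gives $\mathcal{T}(^{\perp}\mathcal{X},\mathcal{F}(\mathcal{X}))=0$.

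\emph{Existence of approximation triangles.} Fix $C\in\mathcal{T}$. Since $\mathcal{S}$ is a simple-minded system, $C\in(\mathcal{S})_n$ for some $n$, and I would induct on $n$, proving that $C\in\mathcal{F}(\mathcal{X})\star\mathcal{X}^{\perp}$.
\begin{enumerate}[$(1)$]
\item Case $n=1$. Either $C\in\mathcal{X}$, and we take the triangle $C\to C\to 0$. Or $C\in\mathcal{S}\setminus\mathcal{X}$; then orthogonality of $\mathcal{S}$ gives $C\in\mathcal{X}^{\perp}$, and we take $0\to C\to C$.
\item Inductive step. Write $C_{n-1}\to C\to S\to\Sigma C_{n-1}$ with $C_{n-1}\in(\mathcal{S})_{n-1}$, and take by induction a triangle $X'\to C_{n-1}\to Y'$ with $X'\in\mathcal{F}(\mathcal{X})$ and $Y'\in\mathcal{X}^{\perp}$. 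The octahedral axiom applied to $X'\to C_{n-1}\to C$ yields a triangle $X'\to C\to E\to\Sigma X'$ together with a triangle $Y'\to E\to S\to\Sigma Y'$.
\item If $S\in\mathcal{X}^{\perp}$, then $E\in\mathcal{X}^{\perp}$ by extension-closure and we are done. In the remaining case $S\in\mathcal{X}$, we must split $E$ further. By the octahedral axiom again, $\mathcal{F}(\mathcal{X})\star(\mathcal{F}(\mathcal{X})\star\mathcal{X}^{\perp})=\mathcal{F}(\mathcal{X})\star\mathcal{X}^{\perp}$, so it suffices to show $E\in\mathcal{F}(\mathcal{X})\star\mathcal{X}^{\perp}$.
\end{enumerate}

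\emph{Main obstacle.} The hard step is producing the torsion part of such an $E\in\mathcal{X}^{\perp}\star\mathcal{X}$: the naive induction does not terminate, since the extensions may have to be rearranged. I would instead use Hom-finiteness together with the Krull--Schmidt property to argue by a maximality principle. Among all triangles $X_C\to C\to Y\to\Sigma X_C$ with $X_C\in\mathcal{F}(\mathcal{X})$, choose one for which the image of $\mathcal{T}(\mathcal{X},X_C)\to\mathcal{T}(\mathcal{X},C)$ is maximal; this maximum exists because $\bigoplus_{X\in\mathcal{X}}\mathcal{T}(X,C)$ is finite dimensional, and $\mathcal{X}$ is finite here. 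Suppose $Y\notin\mathcal{X}^{\perp}$, so there is a nonzero map $g\colon X\to Y$ with $X\in\mathcal{X}$. Then by the octahedral axiom the cocone of the composite $C\to Y\to\mathrm{cone}(g)$ lies in $\mathcal{F}(\mathcal{X})\star\mathcal{X}=\mathcal{F}(\mathcal{X})$, which would enlarge the image and contradict maximality. Verifying this enlargement is exactly where I expect the difficulty to lie: it requires controlling the connecting morphisms to $\Sigma X$. If the direct argument fails, my fallback is to measure progress by the minimal filtration length $n$ of $\mathrm{cone}(g)$ in $(\mathcal{S})_n$, using that $X$ is a brick orthogonal to the other members of $\mathcal{S}$.
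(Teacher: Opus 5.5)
Your formal steps are correct: closure under summands, the vanishing $\mathcal{T}(\mathcal{F}(\mathcal{X}),\mathcal{X}^{\perp})=0$ by induction, the base case, the octahedral reduction, and the deduction of functorial finiteness from the two torsion pairs. (The paper gives no proof of its own; it quotes the result from Dugas.) The gap is the one non-formal step: producing a triangle $X_C\to C\xrightarrow{v}Y\to\Sigma X_C$ with $X_C\in\mathcal{F}(\mathcal{X})$ and $Y\in\mathcal{X}^{\perp}$. The maximality principle you propose cannot give this. The image of $\mathcal{T}(\mathcal{X},X_C)\to\mathcal{T}(\mathcal{X},C)$ is the kernel of $v_*$, so it is maximal as soon as it equals $\mathcal{T}(\mathcal{X},C)$. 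For instance, for $C=0$ every triangle $X_C\to 0\to\Sigma X_C\to\Sigma X_C$ is maximal, yet $\Sigma X_C\notin\mathcal{X}^{\perp}$ whenever $\mathcal{T}(\mathcal{X},\Sigma X_C)\neq 0$. The reason is that your enlargement step only works for maps $g\colon X\to Y$ that factor through $v$. If $\phi\colon X''\to C$ is killed by $C\to Y\to\mathrm{cone}(g)$, then $v\phi=g\psi$ for some $\psi\colon X''\to X$. This $\psi$ is zero or an isomorphism, so either $v\phi=0$ already or $g$ factors through $v$. The nonzero maps $X\to Y$ detected in $\mathcal{T}(X,\Sigma X_C)$ are invisible to your measure.

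Your fallback names the right measure; what is missing is the lemma that makes it decrease: \emph{if $M\in(\mathcal{S})_n$, $X\in\mathcal{S}$ and $0\neq f\colon X\to M$, then $\mathrm{cone}(f)\in(\mathcal{S})_{n-1}$.} Prove it by induction on $n$. Write $S'\xrightarrow{a}M\xrightarrow{b}M'\to\Sigma S'$ with $S'\in\mathcal{S}\cup\{0\}$ and $M'\in(\mathcal{S})_{n-1}$, using $(\mathcal{S})_n={}_n(\mathcal{S})$. If $bf\neq 0$, then $\mathrm{cone}(bf)\in(\mathcal{S})_{n-2}$ by induction, and the octahedral axiom for $X\xrightarrow{f}M\xrightarrow{b}M'$ gives a triangle $S'\to\mathrm{cone}(f)\to\mathrm{cone}(bf)\to\Sigma S'$. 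If $bf=0$, then $f=ag$ with $0\neq g\colon X\to S'$; orthogonality and $\mathcal{T}(X,X)\cong k$ make $g$ an isomorphism, so $\mathrm{cone}(f)\cong M'$. With this lemma, your replacement step $Y\mapsto\mathrm{cone}(g)$ strictly lowers the $\mathcal{S}$-length of the third term. As you showed, this step keeps the first term in $\mathcal{F}(\mathcal{X})\star\mathcal{X}\subseteq\mathcal{F}(\mathcal{X})$. Starting from $0\to C\to C\to 0$, the process therefore terminates after finitely many steps with third term in $\mathcal{X}^{\perp}$, and no maximality is needed. The dual lemma (a nonzero map $M\to X$ has cocone in $(\mathcal{S})_{n-1}$) handles $({}^{\perp}\mathcal{X},\mathcal{F}(\mathcal{X}))$ the same way. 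As written, however, your proposal does not contain this length-reduction step, so the existence half of both torsion pairs is unproved.
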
	
	
\begin{Them}$($\cite[Theorem 1.3]{CLZ}$)$\label{simple-module-and-n-tube}
Let $A$ be a self-injective algebra and $\mathcal{C}$ a quasi-tube of rank $n$. Then the number of elements in a simple-minded system of $A$ lying in $\mathcal{C}$ is strictly less than $n$. In particular, none of the indecomposable modules in a simple-minded system  lie in a homogeneous tube of the AR-quiver.
\end{Them}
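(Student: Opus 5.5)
The plan is to argue by contradiction, using Theorem \ref{subset-of-sms} as the main tool. Suppose $\mathcal{S}$ is a simple-minded system in $A$-$\stmod$ and $\mathcal{X}:=\mathcal{S}\cap\mathcal{C}$ contains at least $n$ objects. By Theorem \ref{subset-of-sms}, $(^{\perp}\mathcal{X},\mathcal{F}(\mathcal{X}))$ and $(\mathcal{F}(\mathcal{X}),\mathcal{X}^{\perp})$ are torsion pairs and $\mathcal{F}(\mathcal{X})$ is functorially finite. The contradiction I aim for is that $\mathcal{F}(\mathcal{X})$ contains a whole infinite ray of the quasi-tube $\mathcal{C}$, and that this is incompatible with functorial finiteness.

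\textbf{Step 1: combinatorics inside the tube.} An indecomposable module in a quasi-tube of rank $n$ is a stable brick only if its quasi-length is at most $n$. This follows from the standard count of homomorphisms along sectional paths and hammocks in $\mathbb{Z}A_\infty/\langle\tau^n\rangle$: a module of larger quasi-length has a nonzero non-invertible endomorphism factoring through the quasi-socle of its quasi-top. Two bricks in $\mathcal{C}$ are orthogonal only if their hammocks avoid each other. Using both facts, I will show that $n$ pairwise orthogonal bricks in $\mathcal{C}$ must cover every $\tau$-orbit of the mouth by their quasi-composition factors. Together with the AR-sequences in $\mathcal{C}$, which are triangles in $A$-$\stmod$, an induction on quasi-length then gives that every module of $\mathcal{C}$ lies in $\mathcal{F}(\mathcal{X})$. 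In the homogeneous case ($n=1$), a single member $S$ of quasi-length $1$ satisfies $\tau S\cong S$, so the AR-triangle $S\to E\to S\to\Sigma S$ puts $E$ in $\mathcal{F}(S)$, and inductively every module of the ray starting at $S$ lies in $\mathcal{F}(S)$.

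\textbf{Step 2: the contradiction.} Pick a module $W$, for instance $\Omega^{-1}\tau S$ or a suitable module at the mouth of $\mathcal{C}$. By the Auslander--Reiten formula $D\StHom_A(X,Y)\cong \StHom_A(\Omega^{-1}\tau^{-1}Y\ldots)$, i.e.\ by Serre duality in $A$-$\stmod$, one gets $\StHom_A(W,Z_i)\neq0$ for all modules $Z_i$ of a ray in $\mathcal{C}$. These nonzero maps can be chosen so that no finite family of them factors through a fixed object. A left $\mathcal{F}(\mathcal{X})$-approximation $W\to F$ exists by Theorem \ref{subset-of-sms}. Since $F$ has only finitely many indecomposable summands in $\mathcal{C}$, there is a bound on quasi-lengths. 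Through such bounded summands, the map $W\to Z_i$ for $Z_i$ of large quasi-length can factor only via composites of irreducible maps going up the ray, and these compose to zero modulo the radical power. This contradicts the existence of the approximation. Alternatively, I can use the torsion triangle $T\to W\to Y\to\Sigma T$ with $T\in\mathcal{F}(\mathcal{X})$ and $Y\in\mathcal{X}^{\perp}$, and show that $T$ must contain summands of unbounded quasi-length.

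\textbf{Main obstacle.} I expect the hardest step to be Step 2: producing a concrete $W$ whose maps into the ray genuinely cannot factor through a finite object of $\mathcal{F}(\mathcal{X})$. The homomorphism bookkeeping in a quasi-tube, where $\Omega$ may move $\mathcal{C}$ to another quasi-tube, requires care. I would first settle the homogeneous case with $W=\Omega^{-1}S$, using $\StHom_A(S,S)\cong k$ and $\tau S\cong S$ to compute the maps explicitly, and then adapt that computation to rank $n$ via the covering $\mathbb{Z}A_\infty\to\mathbb{Z}A_\infty/\langle\tau^n\rangle$. The last assertion of the statement is the special case $n=1$.
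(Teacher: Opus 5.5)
The paper does not prove this statement; it quotes it from \cite[Theorem 1.3]{CLZ}. Your sketch therefore has to stand on its own, and it does not: the decisive Step 2 is missing, as you suspected.

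First, nothing controls the indecomposable summands of the approximation $F\in\mathcal{F}(\mathcal{X})$ that lie outside $\mathcal{C}$. $\mathcal{F}(\mathcal{X})$ is built from triangles in $A$-$\stmod$. An extension $X_1\to Z\to X_2\to\Sigma X_1$ with $X_1,X_2\in\mathcal{C}$ is classified by a morphism into $\Sigma\mathcal{C}=\Omega^{-1}\mathcal{C}$, which may be a different quasi-tube, so $Z$ need not lie in $\mathcal{C}$. You only bound the quasi-lengths of those summands of $F$ that happen to lie in $\mathcal{C}$.

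Second, the proposed mechanism cannot produce a contradiction. By your own Step 1, $\mathcal{C}\subseteq\mathcal{F}(\mathcal{X})$, so any $W$ in $\mathcal{C}$ (``a suitable module at the mouth'') is its own left approximation and yields nothing. A useful $W$ must lie in another component. But then every map $W\to Z_i$ already lies in $\rad^{\infty}$, so saying that composites of irreducible maps lie in high powers of the radical cannot separate the maps that factor through $F$ from those that do not. Moreover, a composite lying in a high power of the radical is not zero.

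Your sentence that the maps ``can be chosen so that no finite family of them factors through a fixed object'' is exactly the failure of covariant finiteness that has to be proved.

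Your test object is also off. $\Omega^{-1}\tau S$ is $\mathbb{S}S$ for the Serre functor $\mathbb{S}=\nu\Omega$ of $A$-$\stmod$, and $\StHom_A(\mathbb{S}S,Z)\cong\D\StHom_A(Z,\mathbb{S}^2S)$ has no reason to be nonzero along a ray. The natural choice is $\mathbb{S}^{-1}E$ for a quasi-simple $E$, since $\StHom_A(\mathbb{S}^{-1}E,[i]E)\cong\D\StHom_A([i]E,E)$. Even then you must show that the map $\mathbb{S}^{-1}E\to E$ dual to $1_E$ cannot be written as $\mathbb{S}^{-1}E\to F\to[i]E\to E$ for all large $i$. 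That is precisely where the summands of $F$ outside $\mathcal{C}$, and their stable Homs into $\mathcal{C}$, enter.

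Step 1 is also not established for $n\geq 2$. Hammocks and mesh relations compute stable Hom-spaces only in standard components. The theorem concerns arbitrary quasi-tubes of arbitrary self-injective algebras, which need not be standard. So neither ``stable bricks have quasi-length at most $n$'' nor ``orthogonal iff disjoint hammocks'' is available in this generality.

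Moreover, even if the quasi-composition factors of $\mathcal{X}$ covered every $\tau$-orbit of the mouth, this would not place the quasi-simples in $\mathcal{F}(\mathcal{X})$. By Remark~\ref{F-S-properties}, $\mathcal{F}(\mathcal{X})$ is closed under extensions and direct summands, not under quasi-subobjects or quasi-quotients. So your AR-triangle induction has nothing to start from.

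The homogeneous part of Step 1 is sound, and it does not even need quasi-length one. From $\tau S\cong S$, the middle term of the AR-triangle ending at $S$ lies in $\mathcal{F}(S)$, and summand-closure plus induction gives a whole ray.

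What is missing is therefore:
(a) a standardness-free argument that $n$ orthogonal stable bricks in $\mathcal{C}$ force an infinite ray into $\mathcal{F}(\mathcal{X})$; and, above all,
(b) an actual proof that such an $\mathcal{F}(\mathcal{X})$ is incompatible with Theorem~\ref{subset-of-sms}. This requires controlling the part of $\mathcal{F}(\mathcal{X})$ outside $\mathcal{C}$.
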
	
The following theorem provides a practical method to verify when an orthogonal system is a simple-minded system.
\begin{Them}$($\cite[Theorem 3.1]{Z}$)$\label{sufficient-condition-of-s.m.s.}
Let $A$ be a domestic Brauer graph algebra and $\mathcal{S}$ an orthogonal system in  $A$-$\stmod$. Then $\mathcal{S}$ is a simple-minded system, if and only if $\mathcal{S}$ contains at least one object of each Euclidean component and the {\rm(}finite{\rm)} set of quasi-simples of all quasi-tubes {\rm(}except homogeneous tubes consisting of band modules{\rm)} is contained in $\mathcal{F(S)}$. 
\end{Them}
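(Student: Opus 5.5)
The plan is to use the shape of the stable Auslander--Reiten quiver of a domestic Brauer graph algebra $A$. Its components are: finitely many Euclidean components $\mathbb{Z}\tilde{A}_{p,q}$, which consist of non-periodic string modules; finitely many exceptional quasi-tubes, which consist of periodic string modules; and homogeneous tubes of band modules. Since $\mathcal{F}(\mathcal{S})$ is closed under direct summands (Remark \ref{F-S-properties}) and $A$-$\stmod$ is Krull--Schmidt, $\mathcal{S}$ is a simple-minded system exactly when every indecomposable object lies in $\mathcal{F}(\mathcal{S})$. We treat the two implications separately.

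\emph{Necessity.} If $\mathcal{S}$ is a simple-minded system, then $\mathcal{F}(\mathcal{S})$ is everything, so it certainly contains the quasi-simples of the exceptional tubes. It remains to show that $\mathcal{S}$ meets each Euclidean component $\mathcal{C}$. Suppose not. We use the elementary observation that if $0\neq X\in(\mathcal{S})_n$, then the first term of a filtration gives a nonzero map $S\to X$ with $S\in\mathcal{S}$. A simple-minded system is finite, so $\mathcal{S}$ lies in finitely many components different from $\mathcal{C}$. We then aim to find $X\in\mathcal{C}$ with $\StHom_A(S,X)=0$ for all $S\in\mathcal{S}$; this contradicts $X\in\mathcal{F}(\mathcal{S})$. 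To produce $X$, we use the Hom-structure of string modules over domestic algebras. Maps from a fixed module in a tube or in another Euclidean component into $\mathcal{C}$ are supported on finitely many rays and corays; concretely, they come from factor-string/substring pairs, and only finitely many of these are available for a bounded module. Hence we may push $X$ far enough along $\mathcal{C}$, say $X=\tau^{m}Y$ with $|m|\gg0$, to kill all such maps. \textbf{I expect this vanishing statement to be the main obstacle}, and I would first try to verify it directly from the combinatorial description of morphisms between string modules.

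\emph{Sufficiency.} Assume $\mathcal{S}$ meets every Euclidean component and contains every exceptional quasi-simple in $\mathcal{F}(\mathcal{S})$. We build up $\mathcal{F}(\mathcal{S})$ in three stages.
\begin{enumerate}[$(1)$]
\item \emph{Exceptional tubes.} Every module of quasi-length $\ell$ in an exceptional quasi-tube is the middle term of a triangle whose end terms are a quasi-simple and a module of quasi-length $\ell-1$; these triangles come from the AR-sequences along rays. Induction on $\ell$ then puts the whole tube into $\mathcal{F}(\mathcal{S})$.
\item \emph{Euclidean components.} Fix $S_0\in\mathcal{S}\cap\mathcal{C}$. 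For string modules, adding or deleting a hook or cohook yields short exact sequences whose third term lies in an exceptional tube; these are the canonical sequences connecting neighbouring rays and corays of $\mathcal{C}$. Using them, and extension-closedness together with stage (1), we move from $S_0$ to its neighbours in $\mathcal{C}$. Iterating in both directions shows $\mathcal{C}\subseteq\mathcal{F}(\mathcal{S})$.
\item \emph{Homogeneous tubes.} Each quasi-simple band module $B_\lambda$ sits in a short exact sequence $0\to M\to B_\lambda\to N\to0$ with $M,N$ string modules; these are obtained by cutting the band. Such $M,N$ lie in Euclidean components or exceptional tubes, so $B_\lambda\in\mathcal{F}(\mathcal{S})$. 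Higher band modules then follow by the quasi-length induction of stage (1).
\end{enumerate}
This covers every indecomposable object, so $\mathcal{F}(\mathcal{S})=A$-$\stmod$.
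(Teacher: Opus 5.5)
Your sufficiency half is sound in outline. Along a ray, the cone of $X(\ell-1)\to X(\ell)$ is $\tau^{-(\ell-1)}X$ by the homotopy-cartesian AR-squares. In stage (2), the cone of an irreducible map of $\mathcal{C}$ lies in an exceptional tube (cf.\ Proposition~\ref{s.m.s.-maximal-universal-module}), and walking backwards along an arrow uses its $\Omega$-translate, which again lies in an exceptional tube. Stage (3) can be shortened as in the proof of Theorem~\ref{BGA-sms}, since all simple modules are string modules. The paper only quotes this theorem without proof, so the comparison is with what a proof must do.

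The genuine gap is in necessity. The vanishing statement you flag is not just unproved; the mechanism you propose for it cannot work. If $T$ lies in a tube of rank $r$, then $\tau^{r}T\cong T$ in $A$-$\stmod$. Hence $\StHom_A(T,\tau^{m}Y)\cong\StHom_A(\tau^{-m}T,Y)$ depends only on $m$ modulo $r$. A nonzero map from $T$ into $\tau^{m}Y$ therefore recurs for infinitely many $m$ of both signs, and no choice of $|m|\gg 0$ removes it.

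Such maps really exist. Take an irreducible map $X\to Y$ in the Euclidean component $\Sigma^{-1}\mathcal{C}$. The triangle $X\to Y\to Q\xrightarrow{w}\Sigma X$ has $Q$ in a tube, which is exactly the fact you use in stage (2). Moreover $w\neq 0$, since otherwise $Y\cong X\oplus Q$, contradicting indecomposability of $Y$. So $\StHom_A(Q,\tau^{kr}\Sigma X)\neq 0$ for every $k\in\mathbb{Z}$, where $r$ is the rank of the tube of $Q$ and $\tau^{kr}\Sigma X\in\mathcal{C}$.

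The string heuristic also points the wrong way. Modules far out in $\mathcal{C}$ have long strings, and these tend to contain more, not fewer, occurrences of a fixed factor string of a bounded module. Your \emph{elementary observation} is correct, but the reason is $\mathcal{F}(\mathcal{S})\subseteq{}^{\perp}(\mathcal{S}^{\perp})$, not the first term of a filtration, whose map may be zero.

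More fundamentally, your necessity argument uses only that $\mathcal{S}$ is finite and $\mathcal{S}^{\perp}=\{0\}$, i.e.\ weak generation. For some $2$-domestic Brauer graph algebras this is strictly weaker than generation (Remark~\ref{wsms-sms}(2)). So you should expect to need either SMS-specific restrictions, such as Theorem~\ref{simple-module-and-n-tube}, to choose $X$ in a suitable $\tau$-orbit, or the full condition $\mathcal{F}(\mathcal{S})=A$-$\stmod$.

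A natural replacement is an extension-closure obstruction. Prove that the additive closure $\mathcal{U}_{\mathcal{C}}$ of the indecomposable objects outside $\mathcal{C}$ is closed under extensions in $A$-$\stmod$. In the $1$-domestic case this means the $\Omega$-periodic objects; in the $2$-domestic case, the tubes together with the other Euclidean component. Then $\mathcal{S}\subseteq\mathcal{U}_{\mathcal{C}}$ forces $\mathcal{F}(\mathcal{S})\subseteq\mathcal{U}_{\mathcal{C}}\neq A$-$\stmod$. This is a statement about cones of maps $\Sigma^{-1}U\to U'$ with $U,U'\in\mathcal{U}_{\mathcal{C}}$, not about maps into $\mathcal{C}$, and it is what your proof still has to establish.
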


\section{ A study on a necessary condition of simple-minded systems}

\subsection{Simple-minded system and weakly simple-minded system} Properties of a simple-minded system have been studied extensively, including the finite cardinality of a simple-minded system,  invariance of a simple-minded system under stable equivalences and  a sufficient and necessary condition of a simple-minded system over representation-finite self-injective  algebras, and so on. In this subsection, we shall focus on a necessary condition as follows. If $\mathcal{S}$ is a simple-minded system, then 
$\Omega(\mathcal{S})$ is  contained in $\mathcal{F}(\mathcal{S})$. Furthermore,   $\Omega({\mathcal{F}(\mathcal{S})})$ is still contained  in $\mathcal{F}(\mathcal{S})$ (refer to Lemma \ref{Omega-F(S)}), it is a necessary condition of a simple-minded system. We shall study when this necessary condition could be sufficient.

\begin{Lem}\label{Omega-F(S)}
Let $A$ be a self-injective algebra and $\mathcal{S}$  an  orthogonal system  in $A$-$\stmod$.  Then $\Omega(\mathcal{S})$ is contained in  $\mathcal{F}(\mathcal{S})$ if and only if $\Omega(\mathcal{F}(\mathcal{S}))$ is contained in  $\mathcal{F}(\mathcal{S})$.
\end{Lem}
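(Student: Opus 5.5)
The reverse implication is immediate: $\mathcal{S}\subseteq\mathcal{F}(\mathcal{S})$, so $\Omega(\mathcal{S})\subseteq\Omega(\mathcal{F}(\mathcal{S}))\subseteq\mathcal{F}(\mathcal{S})$. For the forward implication, assume $\Omega(\mathcal{S})\subseteq\mathcal{F}(\mathcal{S})$. Since $\mathcal{F}(\mathcal{S})=\bigcup_{n\geq 0}(\mathcal{S})_n$, I will show by induction on $n$ that $\Omega((\mathcal{S})_n)\subseteq\mathcal{F}(\mathcal{S})$ for all $n\geq 0$.

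The base cases are direct. For $n=0$ we have $\Omega(0)=0\in\mathcal{F}(\mathcal{S})$. For $n=1$, $(\mathcal{S})_1=\mathcal{S}\cup\{0\}$, and this case is the hypothesis. Objects of $A$-$\stmod$ are only defined up to isomorphism there. $\mathcal{F}(\mathcal{S})$ is closed under isomorphism because triangles are, so applying $\Omega$ to an object isomorphic to a member of $\mathcal{S}$ causes no problem.

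For the inductive step, let $X\in(\mathcal{S})_n=(\mathcal{S})_{n-1}\star(\mathcal{S}\cup\{0\})$. Then there is a triangle $S_1\to X\to S_2\to\Sigma S_1$ in $A$-$\stmod$ with $S_1\in(\mathcal{S})_{n-1}$ and $S_2\in\mathcal{S}\cup\{0\}$. The key fact is that $\Omega=\Sigma^{-1}$ is a triangle functor on the triangulated category $A$-$\stmod$ (with the usual sign twist, which does not affect the objects). Applying it gives a triangle
\[\Omega S_1\longrightarrow \Omega X\longrightarrow \Omega S_2\longrightarrow \Sigma\Omega S_1\cong S_1.\]
This can also be seen concretely: a short exact sequence $0\to S_1\to X\oplus P\to S_2\to 0$ induces, by the horseshoe lemma, a short exact sequence $0\to\Omega S_1\to\Omega X\oplus P'\to\Omega S_2\to 0$, and $P'$ is zero in the stable category.

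By induction $\Omega S_1\in\mathcal{F}(\mathcal{S})$, and by hypothesis $\Omega S_2\in\mathcal{F}(\mathcal{S})$. Since $\mathcal{F}(\mathcal{S})$ is extension closed, $\Omega X\in\mathcal{F}(\mathcal{S})\star\mathcal{F}(\mathcal{S})\subseteq\mathcal{F}(\mathcal{S})$. This completes the induction.

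The only point needing care is the claim that $\Omega$ sends triangles to triangles up to sign, which is standard for the stable category of a self-injective algebra (Happel). Orthogonality of $\mathcal{S}$ is not actually needed for this argument.
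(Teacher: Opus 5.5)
Your proof is correct and is essentially the paper's argument: both reduce to showing $\Omega((\mathcal{S})_n)\subseteq\mathcal{F}(\mathcal{S})$ by induction on $n$, apply $\Omega$ to a triangle with one end in $(\mathcal{S})_{n-1}$ and the other in $\mathcal{S}$, and conclude by extension-closure. The only difference is that you read this triangle directly off the definition $(\mathcal{S})_n=(\mathcal{S})_{n-1}\star(\mathcal{S}\cup\{0\})$, whereas the paper cites \cite[Lemma 2.6]{Dugas} for it.
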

\begin{proof}
It is clear that necessity holds. To prove sufficiency, it suffices to show that each  $\Omega(\mathcal{(S)}_{n})$ is contained in $\mathcal{F}(\mathcal{S})$ for every positive integer $n$.  We induct  on positive integer $n$. It is easy to see that $\Omega(\mathcal{(S)}_{0})=\{0\}$ and $\Omega(\mathcal{(S)}_{1})=\Omega(\mathcal{S})$ are contained in $\mathcal{F}(\mathcal{S})$. Without loss of generality, we assume our conclusion holds for $n=k-1$, we prove that  $\Omega(\mathcal{(S)}_{k})$ is contained in $\mathcal{F}(\mathcal{S})$. By \cite[Lemma 2.6]{Dugas},  we know that for an object $X\in \mathcal{(S)}_{k}$, there is a triangle as follows.
	\begin{equation}\label{triangle-k}
		X_{1}\xrightarrow{}X\xrightarrow{}S_{1}\xrightarrow{} X_{1}[1], 
	\end{equation} 
	where $S_{1}\in\mathcal{S}$  and  $X_{1}\in\mathcal{(S)}_{k-1}$.
	
	Applying  functor $\Omega$ to the triangle (\ref{triangle-k}), we get the following triangle:
	\begin{equation}\label{triangle-k-1}
		\Omega(X_{1})\xrightarrow{}\Omega(X)\xrightarrow{}\Omega(S_{1})\xrightarrow{}  \Omega(X_{1})[1].
	\end{equation} 
	
	Since  both $\Omega(\mathcal{S})$ and $\Omega(\mathcal{(S)}_{k-1})$ are contained  in $\mathcal{F}(\mathcal{S})$, by induction, 	both $\Omega(S_{1})$ and $\Omega(X_{1})$ are  in $\mathcal{F}(\mathcal{S})$. Since $\mathcal{F}(\mathcal{S})$ is closed under extensions, $\Omega(X)$ is in $\mathcal{F}(\mathcal{S})$. It follows that $\Omega(\mathcal{(S)}_{k})$ is contained in $\mathcal{F}(\mathcal{S})$.  Hence our induction holds,  thus sufficient condition holds.
\end{proof}

Dually, we have 

\begin{Lem}
Let $A$ be a self-injective algebra and  $\mathcal{S}$  an  orthogonal system  in $A$-$\stmod$.  Then $\Sigma(\mathcal{S})$ is contained in  $\mathcal{F}(\mathcal{S})$ if and only if $\Sigma(\mathcal{F}(\mathcal{S}))$ is contained  in  $\mathcal{F}(\mathcal{S})$.
\end{Lem}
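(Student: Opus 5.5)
The plan is to dualize the proof of Lemma \ref{Omega-F(S)}, with $\Sigma$ in place of $\Omega$. One direction is immediate: $\mathcal{S}=(\mathcal{S})_{1}\subseteq\mathcal{F}(\mathcal{S})$, so $\Sigma(\mathcal{F}(\mathcal{S}))\subseteq\mathcal{F}(\mathcal{S})$ gives $\Sigma(\mathcal{S})\subseteq\mathcal{F}(\mathcal{S})$. For the other direction, assume $\Sigma(\mathcal{S})\subseteq\mathcal{F}(\mathcal{S})$. Since $\mathcal{F}(\mathcal{S})=\bigcup_{n\geq0}(\mathcal{S})_{n}$, it suffices to show $\Sigma((\mathcal{S})_{n})\subseteq\mathcal{F}(\mathcal{S})$ for every $n\geq 0$. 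I will prove this by induction on $n$.

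For the base cases, $\Sigma((\mathcal{S})_{0})=\{0\}$. Also $(\mathcal{S})_{1}=\mathcal{S}\cup\{0\}$ (up to isomorphism), so $\Sigma((\mathcal{S})_{1})\subseteq\mathcal{F}(\mathcal{S})$ by hypothesis.

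For the inductive step, take $X\in(\mathcal{S})_{k}=(\mathcal{S})_{k-1}\star(\mathcal{S}\cup\{0\})$. By definition of $\star$ (equivalently, by \cite[Lemma 2.6]{Dugas}), there is a triangle
\[X_{1}\longrightarrow X\longrightarrow S_{1}\longrightarrow \Sigma X_{1}\]
in $A$-$\stmod$ with $X_{1}\in(\mathcal{S})_{k-1}$ and $S_{1}\in\mathcal{S}\cup\{0\}$. Since $A$ is self-injective, $A$-$\stmod$ is triangulated with suspension $\Sigma$, and $\Sigma$ is a triangle functor. Rotating the triangle three times, or equivalently applying $\Sigma$ and adjusting the sign of one morphism, gives a distinguished triangle
\[\Sigma X_{1}\longrightarrow \Sigma X\longrightarrow \Sigma S_{1}\longrightarrow \Sigma^{2}X_{1}.\]
By the induction hypothesis $\Sigma X_{1}\in\mathcal{F}(\mathcal{S})$, and by assumption $\Sigma S_{1}\in\mathcal{F}(\mathcal{S})$. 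Since $\mathcal{F}(\mathcal{S})$ is extension closed, $\Sigma X\in\mathcal{F}(\mathcal{S})$, which completes the induction.

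The only point needing care, which I expect to be the mild obstacle, is justifying that $\Sigma$ sends distinguished triangles to distinguished triangles, up to a sign on the connecting morphism. This is standard for the suspension of a triangulated category, and it is all that is needed, since membership in $\mathcal{S}_{1}\star\mathcal{S}_{2}$ depends only on the existence of some triangle. Alternatively, since $A$ is self-injective, one can use that $\Sigma$ is a quasi-inverse of $\Omega$ on $A$-$\stmod$ and argue exactly as in Lemma \ref{Omega-F(S)}.
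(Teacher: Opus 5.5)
Your proof is correct and is exactly what the paper means by ``Dually'': it mirrors the inductive proof of Lemma \ref{Omega-F(S)} with $\Sigma$ in place of $\Omega$. You use the defining triangle $X_1\to X\to S_1\to\Sigma X_1$ with $X_1\in(\mathcal{S})_{k-1}$, transport it by $\Sigma$ (your triple rotation handles the signs), and conclude by extension-closure of $\mathcal{F}(\mathcal{S})$.
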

Recall that, for a family of object $\mathcal{S}$, $\mathcal{S}$ is said to be {\bf  Nakayama-stable} provided that $\nu(\mathcal{S})=\mathcal{S}$, where $\nu=\D(A)\otimes_{A}-$ is Nakayama functor and $\D=\Hom_{k}(-,k)$ is $k$-duality.
\begin{Lem} \label{omega-self-injective-1}
Let $A$ be a self-injective algebra and let $\mathcal{S}=\{S_{i}\mid i\in I\}$ be a Nakayama-stable orthogonal system in $A$-$\stmod$.  Then $\Omega(\mathcal{S})$ is contained in $\mathcal{F}(\mathcal{S})$ if and only if $\Sigma(\mathcal{S})$ is contained in  $\mathcal{F}(\mathcal{S})$. 	
\end{Lem}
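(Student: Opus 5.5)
The plan is to prove only the forward implication: assume $\Omega(\mathcal{S})\subseteq\mathcal{F}(\mathcal{S})$ and deduce $\Sigma(\mathcal{S})\subseteq\mathcal{F}(\mathcal{S})$. The converse is the same argument with $\Omega$ and $\Sigma$ interchanged, because Nakayama-stability is a self-dual hypothesis.

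\emph{Step 1 (reductions).} By Lemma \ref{Omega-F(S)}, the hypothesis upgrades to $\Omega(\mathcal{F}(\mathcal{S}))\subseteq\mathcal{F}(\mathcal{S})$. Next, $\nu$ is an exact autoequivalence of $A$-$\stmod$ commuting with $\Omega$ and $\Sigma$, and $\nu(\mathcal{S})=\mathcal{S}$. Since $\nu$ preserves triangles, this gives $\nu(\mathcal{F}(\mathcal{S}))=\mathcal{F}(\mathcal{S})$, and likewise for $\nu^{-1}$. Combining the two, the functor $\Omega\nu$ maps $\mathcal{F}(\mathcal{S})$ into itself.

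\emph{Step 2 (Serre duality).} For a self-injective algebra, the Auslander--Reiten formula gives a Serre functor $\mathbb{S}=\Omega\nu\cong\Sigma\tau$ on $A$-$\stmod$:
\[
\D\StHom_A(X,Y)\cong\StHom_A(Y,\Omega\nu X)
\]
for all $X,Y$. By Step 1, $\mathbb{S}$ preserves $\mathcal{F}(\mathcal{S})$. It suffices to show $\mathbb{S}^{-1}=\Sigma\nu^{-1}$ also preserves $\mathcal{F}(\mathcal{S})$. Indeed, $\Sigma S=\nu\,\mathbb{S}^{-1}(S)$, so for $S\in\mathcal{S}$ this gives $\Sigma S\in\nu(\mathcal{F}(\mathcal{S}))=\mathcal{F}(\mathcal{S})$.

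\emph{Step 3 (the main obstacle).} We must show that $\mathbb{S}$ restricts to a self-equivalence of $\mathcal{F}(\mathcal{S})$, i.e.\ that every $S\in\mathcal{S}$ lies in $\mathbb{S}(\mathcal{F}(\mathcal{S}))$. I would try the following.

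Each $S\in\mathcal{S}$ is a stable brick, so Serre duality gives $\StHom(S,\mathbb{S}S)\cong\D\StHom(S,S)\cong k$. Hence $\mathbb{S}S\in\mathcal{F}(\mathcal{S})$ has a nonzero map from $S$, and by orthogonality $\StHom(S',\mathbb{S}S)\cong\D\StHom(S,S')=0$ for all $S'\neq S$ in $\mathcal{S}$.

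In the $\star$-filtration of $\mathbb{S}S$ (via \cite[Lemma 2.6]{Dugas}), the first factor is therefore forced to be $S$, and $S$ occurs exactly once as a ``top'' factor. Using the triangle $S\to\mathbb{S}S\to X_1\to\Sigma S$ with $X_1\in\mathcal{F}(\mathcal{S})$, apply $\mathbb{S}^{-1}$:
\[
\mathbb{S}^{-1}S\to S\to\mathbb{S}^{-1}X_1\to\Sigma\,\mathbb{S}^{-1}S.
\]
One then argues by induction on the filtration length, using that $\mathbb{S}$ induces a bijection on $\mathcal{S}$ at the level of these ``tops'', so that $\mathbb{S}^{-1}$ of each object of $\mathcal{F}(\mathcal{S})$ is again filtered by $\mathcal{S}$.

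The delicate point is controlling the filtration length so that the induction terminates. If this fails in general, I would fall back on assuming $\mathcal{S}$ finite, where $\mathcal{F}(\mathcal{S})$ has finitely many objects of each length. In that case an injective length-preserving map on isomorphism classes is surjective, and this yields $\mathbb{S}^{-1}(\mathcal{F}(\mathcal{S}))\subseteq\mathcal{F}(\mathcal{S})$.
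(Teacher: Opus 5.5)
Steps 1 and 2 are sound, and your observation that a filtration of $\mathbb{S}S$ must start with $S$ is the right one. But Step 3 never closes, so as written this is not a proof.

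The induction you sketch is circular. To deduce $\mathbb{S}^{-1}S\in\mathcal{F}(\mathcal{S})$ from $\mathbb{S}^{-1}S\to S\to\mathbb{S}^{-1}X_1\to\Sigma\mathbb{S}^{-1}S$, you ask for $\mathbb{S}^{-1}X_1\in\mathcal{F}(\mathcal{S})$. However, $X_1$ can have any length, so the length-one case already depends on the longer ones.

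The fallback does not rescue this. It adds a finiteness hypothesis that is not in the statement, and both of its ingredients fail:
\begin{enumerate}
\item $\mathbb{S}$ does not preserve filtration length. For the symmetric algebras of Section 4 we have $\mathbb{S}=\Omega$, and $\Omega(M)\notin\mathcal{S}\cup\{0\}$ although $M\in\mathcal{S}$.
\item A single $(\mathcal{S})_d$ may contain infinitely many isomorphism classes even for finite $\mathcal{S}$. Take $\mathcal{S}$ to be the simple modules of a domestic Brauer graph algebra; the band modules of dimension $d$ form a one-parameter family inside $(\mathcal{S})_d$.
\end{enumerate}

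The missing step is a single rotation, and with it your argument becomes the paper's. Rotating $S\to\mathbb{S}S\to X_1\to\Sigma S$ twice gives
\[
X_1\longrightarrow\Sigma S\longrightarrow\Sigma\mathbb{S}S\longrightarrow\Sigma X_1,\qquad \Sigma\mathbb{S}S=\Sigma\Omega\nu S\cong\nu S\in\mathcal{S},
\]
so $\Sigma S\in X_1\star\nu S\subseteq\mathcal{F}(\mathcal{S})$ directly. In your $\mathbb{S}^{-1}$-triangle you looked at the wrong term. Rotating that triangle backwards shows $\mathbb{S}^{-1}S\in\Sigma^{-1}\mathbb{S}^{-1}X_1\star S$, and $\Sigma^{-1}\mathbb{S}^{-1}X_1\cong\nu^{-1}X_1$ already lies in $\mathcal{F}(\mathcal{S})$ by your Step 1. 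No induction is needed.

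The paper does exactly this:
\begin{itemize}
\item It writes a triangle $S_i\to\Omega(S_j)\to M\to S_i[1]$ with $M\in(\mathcal{S})_{m-1}$.
\item It uses Serre duality and orthogonality to force $S_i=\nu^{-1}(S_j)$. This is your first-factor observation with $S=\nu^{-1}S_j$, so that $\mathbb{S}S=\Omega S_j$.
\item It rotates twice to get $\Sigma\nu^{-1}(S_j)\in M\star S_j$.
\item It lets $j$ range over $I$, using that $\nu$ permutes $\mathcal{S}$.
\end{itemize}
Neither Lemma~\ref{Omega-F(S)} nor the invariance of all of $\mathcal{F}(\mathcal{S})$ under $\mathbb{S}$ is needed; only $\Omega(\nu S)\in\mathcal{F}(\mathcal{S})$ for $S\in\mathcal{S}$ is used.

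One point that both you and the paper (which simply takes a non-split triangle) leave implicit: the first map of the filtration triangle must be taken nonzero. That is what forces the first factor to be $S$.
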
	
\begin{proof}
We prove only that if $\Omega(\mathcal{S})$ is contained in $\mathcal{F}(\mathcal{S})$, then $\Sigma(\mathcal{S})$ is also contained in  $\mathcal{F}(\mathcal{S})$. The converse part is dual. Take  $S_{j}$ in $\mathcal{S}$ for $j\in I$.  Since $\Omega(\mathcal{S})$ is contained in $\mathcal{F}(\mathcal{S})$, there is  a positive integer $m$  such that $\Omega(S_{j})$ is in  $ (\mathcal{S})_{m}$ for some integer $m$. Since $\Omega(S_{j})\in (\mathcal{S})_{m}$,  there is a non-split triangle as follows.
	
\begin{equation}\label{seq-1}
S_{i}\xrightarrow{\alpha}\Omega(S_{j})\xrightarrow{}M\xrightarrow{} S_{i}[1],
\end{equation}
\noindent where $S_{i}\in\mathcal{S}$ and $M\in (\mathcal{S})_{m-1}$.
	
We claim that $S_{i}=\nu^{-1}(S_{j})$. Otherwise,
 \[\StHom_A(S_{i}, \Omega(S_{j}))\cong\D\StHom_A(\Omega(S_{j}), \nu\Omega(S_{i}))\cong\StHom_A(S_{j}, \nu(S_{i}))\\\cong 0.\] 
 Thus triangle (\ref{seq-1}) splits, it is a contradiction. 
Rotating the above triangle (\ref{seq-1}) to the right twice,  we have 
\[M\xrightarrow{\alpha}\nu^{-1}(S_{j})[1]\xrightarrow{}S_{j}\xrightarrow{} M[1].\] 	
Since  $M\in (\mathcal{S})_{m-1}$ and $S_{j}\in\mathcal{S}$, $\Sigma(\nu^{-1}(S_{j}))=\nu^{-1}(S_{j})[1]$ is in $(\mathcal{S})_{m}$. 
Since Nakayama functor $\nu$  is an equivalence and it is  also a permutation on $\mathcal{S}$,  it is clear that $\{\Sigma(\nu^{-1}(S_{j}))\mid j\in I\}=\Sigma(\mathcal{S})$. Hence 
$\Sigma(\mathcal{S})$ is  contained in  $\mathcal{F}(\mathcal{S})$.
\end{proof}

\begin{Cor}\label{two-out-of-three}
Let $A$ be a self-injective algebra and  $\mathcal{S}$  an  orthogonal system  in $A$-$\stmod$.   Then $\Omega(\mathcal{S})$ {\rm(}resp. $\Sigma(\mathcal{S})${\rm)} is contained in  $\mathcal{F}(\mathcal{S})$, if  and only if  $\mathcal{F}(\mathcal{S})$ satisfies two-out-of-three condition, that is, for the objects $X$, $Y$ and $Z$ in a triangle
\[X\xrightarrow{}Y\xrightarrow{}Z\xrightarrow{}  X[1],\]
if any two terms  are  in $\mathcal{F}(\mathcal{S})$,   so is the third term. 
\end{Cor}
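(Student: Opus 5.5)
The plan is to prove the two implications separately, writing $[1]=\Sigma$ and noting $[-1]=\Omega$ in $A$-$\stmod$.

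\emph{If $\mathcal{F}(\mathcal{S})$ satisfies two-out-of-three, then $\Omega(\mathcal{S})$ and $\Sigma(\mathcal{S})$ lie in $\mathcal{F}(\mathcal{S})$.} Take $S\in\mathcal{S}$ and the triangle
\[S\xrightarrow{} 0\xrightarrow{} S[1]\xrightarrow{\mathrm{id}} S[1].\]
Its first two terms $S$ and $0$ lie in $\mathcal{F}(\mathcal{S})$, so two-out-of-three forces $\Sigma(S)=S[1]\in\mathcal{F}(\mathcal{S})$. In the same way, the triangle
\[\Omega(S)\xrightarrow{}0\xrightarrow{}S\xrightarrow{}\Omega(S)[1]\]
gives $\Omega(S)\in\mathcal{F}(\mathcal{S})$. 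So either inclusion follows.

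\emph{If $\Omega(\mathcal{S})\subseteq\mathcal{F}(\mathcal{S})$, then two-out-of-three holds.} By Lemma \ref{Omega-F(S)}, this hypothesis gives $\Omega(\mathcal{F}(\mathcal{S}))\subseteq\mathcal{F}(\mathcal{S})$. Now let $X\to Y\to Z\to X[1]$ be a triangle, and treat the three cases.
\begin{enumerate}[$(1)$]
\item If $X,Z\in\mathcal{F}(\mathcal{S})$, then $Y\in\mathcal{F}(\mathcal{S})$ because $\mathcal{F}(\mathcal{S})$ is extension closed.
\item If $Y,Z\in\mathcal{F}(\mathcal{S})$, rotate to the left to get $\Omega(Z)\to X\to Y\to Z$. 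Here $\Omega(Z)\in\mathcal{F}(\mathcal{S})$, so $X$ is an extension of $Y$ by $\Omega(Z)$ and lies in $\mathcal{F}(\mathcal{S})$.
\item If $X,Y\in\mathcal{F}(\mathcal{S})$, rotate to the right to get $Y\to Z\to X[1]\to Y[1]$. This exhibits $Z$ as an extension of $\Sigma(X)$ by $Y$, so it requires $\Sigma(X)\in\mathcal{F}(\mathcal{S})$.
\end{enumerate}

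\emph{Main obstacle.} Case (3) needs $\Sigma(\mathcal{F}(\mathcal{S}))\subseteq\mathcal{F}(\mathcal{S})$, and this does not follow formally from closure under $\Omega$ alone. Lemma \ref{omega-self-injective-1} supplies the $\Sigma$-closure only when $\mathcal{S}$ is Nakayama-stable, which is not assumed here.

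The approach I would try first uses finiteness. $A$-$\stmod$ is Hom-finite and Krull--Schmidt, and $\mathcal{F}(\mathcal{S})$ is closed under direct summands (Remark \ref{F-S-properties}). The functor $\Omega$ is an autoequivalence, so it maps the indecomposable objects of $\mathcal{F}(\mathcal{S})$ injectively into that same class. I would then try to show that the objects of $\mathcal{S}$ are permuted by a power of $\Omega$ up to objects of $\mathcal{F}(\mathcal{S})$, giving $\Sigma(S)\in\mathcal{F}(\mathcal{S})$ for each $S\in\mathcal{S}$.

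If that does not work, I would read the "resp." in the statement as pairing the hypotheses: assume both $\Omega(\mathcal{S})\subseteq\mathcal{F}(\mathcal{S})$ and $\Sigma(\mathcal{S})\subseteq\mathcal{F}(\mathcal{S})$. Lemma \ref{Omega-F(S)} and its dual then give $\mathcal{F}(\mathcal{S})$ stable under both $\Omega$ and $\Sigma$, and the three cases above go through. For Nakayama-stable $\mathcal{S}$, Lemma \ref{omega-self-injective-1} shows that either inclusion implies the other.
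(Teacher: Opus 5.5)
Your reverse implication is correct, and so are cases (1) and (2) of the forward implication. The obstacle you isolate in case (3) is a genuine gap, and your finiteness route does not close it.

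\emph{Why the finiteness route fails.} Because $\mathcal{F}(\mathcal{S})$ is closed under summands (Remark \ref{F-S-properties}) and $\Omega$ is an autoequivalence, $\Omega$ induces an injective self-map on the isomorphism classes of indecomposables of $\mathcal{F}(\mathcal{S})$. That map is onto only in special situations:
\begin{enumerate}[$(1)$]
\item the set of isomorphism classes is finite, for example when $A$ is representation-finite;
\item every member of $\mathcal{S}$ is $\Omega$-periodic, in which case directly $\Sigma(S)\cong\Omega^{p-1}(S)$.
\end{enumerate}
Now suppose $\mathcal{S}$ contains a non-periodic $S$, which is exactly the situation of Theorem \ref{BGA-sms}. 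Then the objects $\Omega^{n}(S)$, $n\ge 0$, are infinitely many pairwise non-isomorphic indecomposables of $\mathcal{F}(\mathcal{S})$, and injectivity gives no preimage of $S$.

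\emph{The implication is not formal.} Work in the bounded derived category of finite-dimensional $k$-vector spaces and take $\mathcal{S}=\{k[n]\mid n\le 0\}$. This is an orthogonal system with $\mathcal{F}(\mathcal{S})=\add(\mathcal{S})$, which is closed under $[-1]$ but not under $[1]$. The triangle $k\to 0\to k[1]\to k[1]$ violates two-out-of-three. So some input specific to $A$-$\stmod$ is needed.

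\emph{How the paper stands.} The paper gives no proof of this corollary. It is placed right after Lemma \ref{omega-self-injective-1} and is evidently meant to follow from Lemma \ref{Omega-F(S)}, its dual, and Lemma \ref{omega-self-injective-1}. That is precisely your fallback. However, Lemma \ref{omega-self-injective-1} uses Nakayama-stability in two places:
\begin{enumerate}[$(1)$]
\item to force the top term of a filtration of $\Omega(S_j)$ to be $\nu^{-1}(S_j)$;
\item to conclude that $\{\Sigma(\nu^{-1}(S_j))\}$ exhausts $\Sigma(\mathcal{S})$.
\end{enumerate}
This hypothesis is absent from the corollary, and nothing in the paper proves the statement without it.

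What both you and the paper actually establish is the version for Nakayama-stable $\mathcal{S}$, or with both inclusions assumed. That version is all the paper uses later. Brauer graph algebras are symmetric, so every orthogonal system there is Nakayama-stable, and Proposition \ref{sms-omega} assumes Nakayama-stability outright. With that hypothesis added, your three-case rotation argument is complete and coincides with the intended proof.
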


\begin{Cor} \label{omega-self-injective-2}
Let $A$ be a self-injective algebra and let $\mathcal{S}=\{S_{i}\mid i\in I\}$ be a Nakayama-stable orthogonal system in $A$-$\stmod$.  If $\Omega(\mathcal{S})$ is contained  in $\mathcal{F}(\mathcal{S})$, then $\mathcal{F}(\mathcal{S})$ is a triangulated subcategory of $A$-$\stmod$. 	
\end{Cor}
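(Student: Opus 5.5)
The statement to prove is Corollary \ref{omega-self-injective-2}. By definition, a triangulated subcategory of $A$-$\stmod$ is a full subcategory containing $0$, closed under isomorphisms, under $\Omega$ and $\Sigma$ (the inverse shift and the shift), and satisfying the two-out-of-three property for triangles. Closure under direct summands is not needed here, though Remark \ref{F-S-properties} supplies it anyway. The plan is to check each of these properties for $\mathcal{F}(\mathcal{S})$ using the lemmas above.

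First, $0\in(\mathcal{S})_0\subseteq\mathcal{F}(\mathcal{S})$, and $\mathcal{F}(\mathcal{S})$ is closed under isomorphisms by construction, since membership in $\mathcal{S}_1\star\mathcal{S}_2$ is defined through triangles. Next, since $\Omega(\mathcal{S})\subseteq\mathcal{F}(\mathcal{S})$, Lemma \ref{Omega-F(S)} gives $\Omega(\mathcal{F}(\mathcal{S}))\subseteq\mathcal{F}(\mathcal{S})$. Because $\mathcal{S}$ is Nakayama-stable, Lemma \ref{omega-self-injective-1} gives $\Sigma(\mathcal{S})\subseteq\mathcal{F}(\mathcal{S})$. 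The dual lemma, stated just after Lemma \ref{Omega-F(S)}, then upgrades this to $\Sigma(\mathcal{F}(\mathcal{S}))\subseteq\mathcal{F}(\mathcal{S})$. Since $\Omega$ and $\Sigma$ are mutually inverse on $A$-$\stmod$, the subcategory $\mathcal{F}(\mathcal{S})$ is stable under the shift in both directions.

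For two-out-of-three, take a triangle $X\to Y\to Z\to X[1]$ and consider the three cases directly; this also justifies Corollary \ref{two-out-of-three}.
\begin{enumerate}[$(1)$]
\item If $X,Z\in\mathcal{F}(\mathcal{S})$, then $Y\in\mathcal{F}(\mathcal{S})$ by extension-closure.
\item If $Y,Z\in\mathcal{F}(\mathcal{S})$, rotate to obtain $\Omega Z\to X\to Y\to Z$. Since $\Omega Z\in\mathcal{F}(\mathcal{S})$ and $Y\in\mathcal{F}(\mathcal{S})$, extension-closure gives $X\in\mathcal{F}(\mathcal{S})$.
\item If $X,Y\in\mathcal{F}(\mathcal{S})$, rotate to obtain $Y\to Z\to \Sigma X\to \Sigma Y$. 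Here $\Sigma X\in\mathcal{F}(\mathcal{S})$, so extension-closure again gives $Z\in\mathcal{F}(\mathcal{S})$.
\end{enumerate}
Note that case (2) uses only $\Omega$-closure, while case (3) needs $\Sigma$-closure. This is exactly where Nakayama-stability enters, through Lemma \ref{omega-self-injective-1}.

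Combining these facts, $\mathcal{F}(\mathcal{S})$ is a full additive subcategory (it is closed under finite direct sums, since split triangles are triangles), it is closed under shifts in both directions, and it is closed under cones. Hence it is a triangulated subcategory. There is no real obstacle: the only substantive input is the passage from $\Omega$-closure to $\Sigma$-closure, which Lemma \ref{omega-self-injective-1} already provides.
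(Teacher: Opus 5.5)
Your proposal is correct and follows the paper's implicit argument: Lemma \ref{omega-self-injective-1} uses Nakayama-stability to turn $\Omega$-closure of $\mathcal{S}$ into $\Sigma$-closure, Lemma \ref{Omega-F(S)} and its dual extend both closures to $\mathcal{F}(\mathcal{S})$, and the two-out-of-three property (the content of Corollary \ref{two-out-of-three}) then follows by rotating triangles and using extension-closure. Your explicit three-case check is a little more careful than the paper's citation of Corollary \ref{two-out-of-three}, because it shows that the cone case needs $\Sigma$-closure, which is exactly where Nakayama-stability is used.
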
	
	
Let $\mathcal{T}$ be a triangulated category with suspension functor $\Sigma$. Recall that  a pair of strictly full subcategories $(\mathcal{S},\mathcal{S}')$ is a {\bf t-structure} on $\mathcal{T}$  provided that $(\mathcal{S},\mathcal{S}')$ is a torsion pair and $\Sigma\mathcal{S}\subseteq\mathcal{S}$,  $\Sigma^{-1}\mathcal{S}'\subseteq\mathcal{S}'$.
Let $A$ be a self-injective algebra. 
Let $\mathcal{S}$ be a full  subcategory of  $A$-$\Stmod$. We recall some notations as follows.
\begin{enumerate}[(1)]	
	\item $\copd(\mathcal{S})$ is the smallest full subcategory $\mathcal{R}\subseteq A$-$\Stmod$, which is closed under finite direct sums and satisfying \[\mathcal{S}\subseteq\mathcal{R},\ \  \  \  \  \  \   \mathcal{R}\star\mathcal{R}\subseteq\mathcal{R}.\]
		
	\item $\Copd(\mathcal{S})$ is the smallest full subcategory $\mathcal{R}\subseteq A$-$\Stmod$, which is closed under  small coproducts and  satisfying \[\mathcal{S}\subseteq\mathcal{R},\ \  \  \  \  \  \  \mathcal{R}\star\mathcal{R}\subseteq\mathcal{R}.\]
\end{enumerate}
	
\begin{Lem}\label{copod-A}
Let $A$ be a self-injective algebra and $\mathcal{S}$ an orthogonal system in $A$-$\stmod$.
Then  $\Copd(\mathcal{S})=\Copd(\mathcal{F}(\mathcal{S}))$$($resp. $\copd(\mathcal{S})=\copd(\mathcal{F}(\mathcal{S}))$$)$.	
		
\end{Lem}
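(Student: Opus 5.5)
The lemma says $\Copd(\mathcal{S})=\Copd(\mathcal{F}(\mathcal{S}))$, and likewise for $\copd$. I would prove it by showing each side is contained in the other, using only that both operators are closure operators: they are monotone, and each is the smallest subcategory containing its argument with the stated closure properties. The self-injectivity of $A$ and the orthogonality of $\mathcal{S}$ play no real role. They only ensure that $\mathcal{S}$, $\mathcal{F}(\mathcal{S})$ and the closures live in the triangulated categories $A$-$\stmod\subseteq A$-$\Stmod$, so that the operator $\star$ makes sense. I will treat $\Copd$ and note that the same argument works verbatim for $\copd$, with small coproducts replaced by finite direct sums.

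\emph{Inclusion $\Copd(\mathcal{S})\subseteq\Copd(\mathcal{F}(\mathcal{S}))$.} Since $\mathcal{S}\subseteq\mathcal{F}(\mathcal{S})\subseteq\Copd(\mathcal{F}(\mathcal{S}))$, the category $\mathcal{R}=\Copd(\mathcal{F}(\mathcal{S}))$ contains $\mathcal{S}$. It is also closed under small coproducts and satisfies $\mathcal{R}\star\mathcal{R}\subseteq\mathcal{R}$. By minimality of $\Copd(\mathcal{S})$ we get $\Copd(\mathcal{S})\subseteq\mathcal{R}$.

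\emph{Inclusion $\Copd(\mathcal{F}(\mathcal{S}))\subseteq\Copd(\mathcal{S})$.} By minimality of $\Copd(\mathcal{F}(\mathcal{S}))$, it suffices to show $\mathcal{F}(\mathcal{S})\subseteq\mathcal{R}':=\Copd(\mathcal{S})$. Recall $\mathcal{F}(\mathcal{S})=\bigcup_{n\geq0}(\mathcal{S})_n$, and argue by induction on $n$ that $(\mathcal{S})_n\subseteq\mathcal{R}'$.
\begin{itemize}
\item[$\circ$] For $n=0$: $(\mathcal{S})_0=\{0\}$, and $0$ lies in $\mathcal{R}'$ as the empty coproduct. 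In the $\copd$ case, $0$ is the empty finite direct sum; alternatively $0\in\mathcal{S}\star\mathcal{S}$ is not automatic, so I would rely on the convention that closure under finite sums includes the empty sum.
\item[$\circ$] For the inductive step: $(\mathcal{S})_n=(\mathcal{S})_{n-1}\star(\mathcal{S}\cup\{0\})$. Both factors lie in $\mathcal{R}'$ by the induction hypothesis and the base case. Hence $(\mathcal{S})_n\subseteq\mathcal{R}'\star\mathcal{R}'\subseteq\mathcal{R}'$.
\end{itemize}
One should also note that $\mathcal{R}'$ is closed under isomorphism, being a full subcategory defined by closure conditions, so objects appearing in triangles up to isomorphism are covered.

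The only mildly delicate point is this bookkeeping around $0$ and isomorphism closure, together with the fact that $\mathcal{F}(\mathcal{S})$ was defined inside $A$-$\stmod$ while $\Copd$ is taken in $A$-$\Stmod$. The latter is harmless, because $A$-$\stmod$ is a full triangulated subcategory of $A$-$\Stmod$, so triangles in $A$-$\stmod$ are triangles in $A$-$\Stmod$. Apart from this, the argument is formal, and I expect no real obstacle.
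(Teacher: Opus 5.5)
Your proposal is correct and matches the paper's proof. You get $\Copd(\mathcal{S})\subseteq\Copd(\mathcal{F}(\mathcal{S}))$ by minimality, and the reverse inclusion by inducting on $n$ to show $(\mathcal{S})_n\subseteq\Copd(\mathcal{S})$, so $\mathcal{F}(\mathcal{S})\subseteq\Copd(\mathcal{S})$, and then invoking minimality of $\Copd(\mathcal{F}(\mathcal{S}))$; your explicit treatment of the zero object (as the empty coproduct) and of isomorphism closure fills in points the paper leaves implicit, since its induction also tacitly needs $0\in\Copd(\mathcal{S})$.
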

\begin{proof}
We prove only $\Copd(\mathcal{S})=\Copd(\mathcal{F}(\mathcal{S}))$.  First, it is clear that  $\Copd(\mathcal{S})\subseteq\Copd(\mathcal{F}(\mathcal{S}))$.
We prove the converse part. Since  $\mathcal{S}\subseteq\Copd(\mathcal{S})$ and  $\Copd(\mathcal{S})\star\Copd(\mathcal{S})\subseteq\Copd(\mathcal{S})$, $(\mathcal{S})_{2}$ is contained in $\Copd(\mathcal{S})$. By induction, we have $\mathcal{F}(\mathcal{S})\subseteq\Copd(\mathcal{S})$. It follows from definition that $\Copd(\mathcal{S})$ is closed under extensions and small coproducts.  Since $\Copd(\mathcal{F}(\mathcal{S}))$ is  the smallest full subcategory of $A$-$\Stmod$ containing $\mathcal{F}(\mathcal{S})$ which is closed under extensions and coproducts, $\Copd(\mathcal{F}(\mathcal{S}))\subseteq\Copd(\mathcal{S})$. 
\end{proof}
	
Let $A$ be a self-injective algebra and $\mathcal{S}$  an orthogonal system in $A$-$\stmod$. We assume the following  condition are true:
\begin{equation}\label{small-module-large-module}
\text{If}\   {\mathcal{S}^{\perp}}=\{0\}\  \text{in}\ A\text{-}\stmod,\  \text{then}\  {\mathcal{S}^{\perp}}=\{0\}\  \text{in}\  A\text{-}\Stmod.  
\end{equation}
	
\begin{Them}\label{wsms and sms}
Let $A$	be a  self-injective algebra and let $\mathcal{S}$ be a finite  orthogonal system such that  $\Sigma(\mathcal{S})\subseteq\mathcal{F}(\mathcal{S})$ in $A$-$\stmod$. If $\mathcal{S}$  is a weakly simple-minded system, that is, ${\mathcal{S}^{\perp}}=\{0\}$ in A-$\stmod$ and  the above condition {\rm(}\ref{small-module-large-module}{\rm)} holds, then $\mathcal{S}$ is a simple-minded system in $A$-$\stmod$.   
\end{Them}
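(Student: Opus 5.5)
We will show $\mathcal{F}(\mathcal{S})=A$-$\stmod$ by realising $\Copd(\mathcal{S})$ as the aisle of a t-structure on the compactly generated triangulated category $A$-$\Stmod$, and then using condition (\ref{small-module-large-module}) to force the coaisle to vanish.

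\emph{Step 1: the aisle is $\Sigma$-stable.} Since $\Sigma(\mathcal{S})\subseteq\mathcal{F}(\mathcal{S})$, the dual of Lemma \ref{Omega-F(S)} gives $\Sigma(\mathcal{F}(\mathcal{S}))\subseteq\mathcal{F}(\mathcal{S})$. Lemma \ref{copod-A} gives $\Copd(\mathcal{S})=\Copd(\mathcal{F}(\mathcal{S}))$. Because $\Sigma$ is an equivalence on $A$-$\Stmod$ commuting with coproducts and triangles, it follows that $\Sigma\Copd(\mathcal{S})\subseteq\Copd(\mathcal{S})$.

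\emph{Step 2: the t-structure.} The set $\mathcal{S}$ is finite and consists of finitely generated modules, which are compact objects of $A$-$\Stmod$. By the standard result of Alonso--Jeremías--Souto (or Keller--Nicolás) on t-structures generated by a set of compact objects, $\Copd(\mathcal{S})$ is then the aisle of a t-structure $(\Copd(\mathcal{S}),\mathcal{Y})$. We have $\mathcal{Y}=\Copd(\mathcal{S})^{\perp}$, and one checks that this equals $\mathcal{S}^{\perp}$ computed in $A$-$\Stmod$; the point is that $\Hom(-,Y)$ vanishing on $\mathcal{S}$ propagates along extensions and coproducts. By the weak generating hypothesis and (\ref{small-module-large-module}), $\mathcal{S}^{\perp}=\{0\}$ in $A$-$\Stmod$. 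Hence $\mathcal{Y}=0$, and so $\Copd(\mathcal{S})=A$-$\Stmod$.

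\emph{Step 3: descending to $A$-$\stmod$.} We must pass from $\Copd(\mathcal{S})=A$-$\Stmod$ to $\mathcal{F}(\mathcal{S})=A$-$\stmod$, and this is the main obstacle. The idea is to use compactness: a compact object lying in the aisle generated by compact $\mathcal{S}$ should lie in the "finite" aisle, namely the closure of $\mathcal{S}$ under extensions, finite sums and direct summands. This uses the Neeman-type description of compacts in a localizing/aisle construction: a finitely generated $X$ is a direct summand of an object built in finitely many steps from finite coproducts of $\mathcal{S}$, because $\Hom(X,-)$ commutes with the homotopy colimit defining the aisle approximation, so $\mathrm{id}_X$ factors through a finite stage. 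Those finite stages lie in $\copd(\mathcal{S})=\copd(\mathcal{F}(\mathcal{S}))=\mathcal{F}(\mathcal{S})$, using Lemma \ref{copod-A} and the fact that $\mathcal{F}(\mathcal{S})$ is closed under finite sums. Finally, Remark \ref{F-S-properties} says $\mathcal{F}(\mathcal{S})$ is closed under direct summands, so $X\in\mathcal{F}(\mathcal{S})$.

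If Step 3 resists, the alternative is to construct for each $X\in A$-$\stmod$ directly a triangle $U\to X\to V\to$ with $U\in\mathcal{F}(\mathcal{S})$ and $V\in\mathcal{S}^{\perp}$. One would try to obtain it via iterated universal $\mathcal{S}$-approximations, using that $\mathcal{S}$ is finite and Hom-spaces are finite-dimensional. The key difficulty there is termination of the iteration, which is exactly what the large-module argument circumvents.
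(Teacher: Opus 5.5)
Your Steps 1 and 2 are the paper's proof. The paper also passes from $\Sigma(\mathcal{F}(\mathcal{S}))\subseteq\mathcal{F}(\mathcal{S})$ to $\Sigma\Copd(\mathcal{S})\subseteq\Copd(\mathcal{S})$; it cites Canonaco--Neeman--Stellari for the step you verify by hand. It then takes the t-structure $(\Copd(\mathcal{S}),\Copd(\mathcal{S})^{\perp})$ with $\Copd(\mathcal{S})^{\perp}=\mathcal{S}^{\perp}$, and uses (\ref{small-module-large-module}) to get $\Copd(\mathcal{S})=A$-$\Stmod$. For the descent, the paper only asserts that $\Copd(\mathcal{F}(\mathcal{S}))\cap A$-$\stmod=\copd(\mathcal{F}(\mathcal{S}))$ is clear. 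You are right that this is where the real content lies. You are also right that compactness only gives membership up to direct summands, so Remark \ref{F-S-properties} is needed to finish.

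Your justification of Step 3 has a hole, however: the finite stages $U_n$ of the homotopy colimit do not lie in $\copd(\mathcal{S})$. In the Alonso--Jerem\'ias--Souto/Keller--Nicol\'as construction, the first stage is already a coproduct of copies of $\Sigma^{i}S$ indexed by all $i\ge 0$, $S\in\mathcal{S}$ and all morphisms $\Sigma^{i}S\to X$. This is in general an infinite coproduct. Later stages use morphisms into the cones $X_n$, which are not compact. So compactness of $X$ only shows that $X$ is a direct summand of an $n$-fold extension of arbitrary coproducts of objects of $\mathcal{C}=\{\Sigma^{i}S\mid i\ge 0,\ S\in\mathcal{S}\}$.

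The missing ingredient is the standard factorization lemma (Neeman; compare Bondal--Van den Bergh). It says that a morphism from a compact $X$ to an $n$-fold extension of coproducts of objects of $\mathcal{C}$ factors through an $n$-fold extension of finite direct sums of objects of $\mathcal{C}$. The proof is by induction on $n$:
\begin{enumerate}[$(1)$]
\item Given $f\colon X\to Y$ and a triangle $Y_1\to Y\to Y_2\xrightarrow{w}\Sigma Y_1$ with $Y_1$ a coproduct and $Y_2$ at level $n-1$, factor $X\to Y_2$ through some $b\colon Y_2'\to Y_2$ with $Y_2'$ at the finite level $n-1$, by induction.
\item Since $Y_2'$ is compact, $wb$ factors through $\Sigma Y_1'$ for a finite subcoproduct $Y_1'$ of $Y_1$. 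This gives a triangle $Y_1'\to Y'\to Y_2'\to\Sigma Y_1'$ and a morphism of triangles $h\colon Y'\to Y$.
\item The map $X\to Y_2'$ lifts to $g\colon X\to Y'$, because its composite to $\Sigma Y_1'$ becomes zero after the split mono $\Sigma Y_1'\to\Sigma Y_1$.
\item The difference $f-hg$ vanishes in $Y_2$, so it factors through $Y_1$, hence through a finite subcoproduct $Y_1''$ by compactness of $X$.
\item Therefore $f$ factors through $Y'\oplus Y_1''$, which is again at the finite level $n$.
\end{enumerate}
Since $\mathcal{C}\subseteq\mathcal{F}(\mathcal{S})$ by Step 1, this shows that $X$ is a direct summand of an object of $\mathcal{F}(\mathcal{S})$. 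With that, your argument goes through, and it also justifies the paper's equality, corrected to "up to summands".
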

\begin{proof}
We assume that $\mathcal{S}$ is a finite  orthogonal system, $\Sigma(\mathcal{S})\subseteq\mathcal{F}(\mathcal{S})$ and ${\mathcal{S}^{\perp}}=\{0\}$ in A-$\stmod$. If $\Sigma(\mathcal{S})\subseteq\mathcal{F}(\mathcal{S})$, then $\Sigma(\mathcal{F}(\mathcal{S}))\subseteq\mathcal{F}(\mathcal{S})$. Thus, by \cite[Lemma 1.2.2]{CNS}, $\Sigma(\Copd(\mathcal{F}(\mathcal{S})))\subseteq \Copd(\mathcal{F}(\mathcal{S})).$ It follows from Lemma \ref{copod-A} that  $\Sigma(\Copd(\mathcal{S}))\subseteq \Copd(\mathcal{S}).$ By  \cite[Theorem 2.3.3]{CNS}, Canonaco, Neeman and Stellari proved that the pair  $(\Copd(\mathcal{S}),\Copd(\mathcal{S})^{\perp})$ is a t-structure in {\tiny } $A$-$\Stmod$.  Note that ${\Copd(\mathcal{S})^{\perp}}={\mathcal{S}^{\perp}}$.
Since ${\mathcal{S}^{\perp}}=\{0\}$ in A-$\stmod$, by condition (\ref{small-module-large-module}), ${\mathcal{S}^{\perp}}=\{0\}$ in A-$\Stmod$,  thus $\Copd(\mathcal{S})=A$-$\Stmod$.  
		
We claim that  $\mathcal{F}(\mathcal{S})=\Copd(\mathcal{F}(\mathcal{S}))\cap A$-$\stmod$. Indeed, it is clear that 	$\Copd(\mathcal{F}(\mathcal{S}))\cap A$-$\stmod=\copd(\mathcal{F}(\mathcal{S}))=\copd({\mathcal{S}})$.
Note that $\mathcal{F}(\mathcal{S})$ is the smallest extensions closed full  subcategory of $A$-$\stmod$ containing 
$ \mathcal{S}$. Note that $\mathcal{F}(\mathcal{S})$ is closed under finite  coproducts.  It follows from definition that  $\mathcal{F}(\mathcal{S})=\copd({\mathcal{S}}).$
Thus  $\mathcal{F}(\mathcal{S})=\Copd(\mathcal{F}(\mathcal{S}))\cap A$-$\stmod=\Copd(\mathcal{S})\cap A$-$\stmod=A$-$\Stmod\cap A$-$\stmod=A$-$\stmod$. Thus $\mathcal{S}$ is a simple-minded system in  $A$-$\stmod$. 
\end{proof}
	
\begin{Cor}\label{wsms and sms-1}
Let $A$	be a representation-finite self-injective algebra and  $\mathcal{S}$  an  orthogonal system in A-$\stmod$. If $\mathcal{S}$  is a weakly simple-minded system, then $\mathcal{S}$ is a simple-minded system in $A$-$\stmod$.   
\end{Cor}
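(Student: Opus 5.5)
The plan is to deduce the corollary from Theorem \ref{wsms and sms}. That means checking its three hypotheses for a weakly simple-minded system $\mathcal{S}$ over a representation-finite self-injective algebra $A$: $\mathcal{S}$ is finite, $\Sigma(\mathcal{S})\subseteq\mathcal{F}(\mathcal{S})$, and condition (\ref{small-module-large-module}) holds.

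\emph{Finiteness} is immediate. $A$ is representation-finite, so $A$-$\stmod$ has only finitely many indecomposable objects up to isomorphism. Members of an orthogonal system are stable bricks, hence indecomposable, and they are pairwise non-isomorphic because $\StHom_A(M,N)=0$ for distinct $M,N$. So $\mathcal{S}$ is finite.

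\emph{Condition (\ref{small-module-large-module}).} For representation-finite $A$, every module in $A$-$\M$ is a direct sum of finitely generated indecomposables (Auslander, Ringel--Tachikawa). Let $Y\in A$-$\Stmod$ with $\StHom_A(S,Y)=0$ for all $S\in\mathcal{S}$, and write $Y=\bigoplus_j Y_j$ with each $Y_j$ finitely generated. Each $S$ is finitely presented, so a map $S\to Y$ factors through a finite subsum. From this I expect $\StHom_A(S,Y)\cong\bigoplus_j\StHom_A(S,Y_j)$; the point to justify is that a map into a finite subsum factoring through a projective also factors through a projective in that subsum, which holds because projectives are injective. Hence each $Y_j$ lies in $\mathcal{S}^{\perp}$ computed in $A$-$\stmod$, so each $Y_j$ is projective, and therefore $Y\cong 0$ in $A$-$\Stmod$.

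\emph{The inclusion $\Sigma(\mathcal{S})\subseteq\mathcal{F}(\mathcal{S})$} is the main obstacle. Weak generation alone does not obviously produce extensions, so I would rely on finiteness of the stable category. First try: for any nonzero $X$, weak generation gives a nonzero $S\to X$. I would then build a torsion-type approximation: take a minimal right $\add(\mathcal{S})$-approximation and iterate on the cone, aiming to show the process terminates with $X\in\mathcal{F}(\mathcal{S})$. Termination should come from a length or dimension count on the finitely many indecomposables.

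This is essentially the Koenig--Liu argument cited in Remark \ref{wsms-sms}(1), which already shows directly that $\mathcal{S}$ is simple-minded. So the cleanest route may be to invoke \cite{KL}: it gives $\mathcal{F}(\mathcal{S})=A$-$\stmod$, and in particular $\Sigma(\mathcal{S})\subseteq\mathcal{F}(\mathcal{S})$. With all three hypotheses in place, Theorem \ref{wsms and sms} then yields the corollary.
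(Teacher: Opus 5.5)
Your finiteness argument and your check of condition (\ref{small-module-large-module}) are correct. The latter is the same decomposition argument the paper uses. Your commutation of $\StHom_A(S,-)$ with direct sums is sound: a map from $S$ that factors through a projective factors through the finitely generated injective envelope of $S$.

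The gap is the remaining hypothesis $\Sigma(\mathcal{S})\subseteq\mathcal{F}(\mathcal{S})$ of Theorem \ref{wsms and sms}. Your first attempt is not yet an argument. Iterating minimal right $\add(\mathcal{S})$-approximations on cones comes with no decreasing invariant. In $A$-$\stmod$ nothing forces the cone of such an approximation to be ``smaller'' than the object you started from, so finitely many indecomposables does not give termination. Your fallback, quoting Koenig--Liu for $\mathcal{F}(\mathcal{S})=A$-$\stmod$, is the corollary itself (it is exactly Remark \ref{wsms-sms}(1)). Once you have it, $\mathcal{S}$ is already simple-minded and the appeal to Theorem \ref{wsms and sms} is vacuous, so the proposal reduces to citing the statement being proved. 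You were right to single out this hypothesis: the paper's own proof verifies only (\ref{small-module-large-module}) and passes over $\Sigma(\mathcal{S})\subseteq\mathcal{F}(\mathcal{S})$ in silence. But identifying it is not the same as establishing it.

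In the representation-finite case you can close the gap by bypassing Theorem \ref{wsms and sms} entirely. Your approximation idea should become this: approximate by $\mathcal{F}(\mathcal{S})$ rather than $\add(\mathcal{S})$, once, and use minimality instead of iteration.
By Remark \ref{F-S-properties}, $\mathcal{F}(\mathcal{S})$ is closed under direct summands. It has only finitely many indecomposables, so it equals $\add(X)$ for one object $X$ and is contravariantly finite in the Hom-finite Krull--Schmidt category $A$-$\stmod$.

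For $C$ in $A$-$\stmod$, take a minimal right $\mathcal{F}(\mathcal{S})$-approximation $f\colon X_0\to C$ and a triangle $X_0\xrightarrow{f}C\xrightarrow{g}Y\xrightarrow{h}X_0[1]$. Let $u\colon X'\to Y$ with $X'\in\mathcal{F}(\mathcal{S})$.
\begin{enumerate}[$(1)$]
\item Complete $hu$ to a triangle $X_0\xrightarrow{a}E\to X'\xrightarrow{hu}X_0[1]$; then $E\in\mathcal{F}(\mathcal{S})$.
\item The axiom on morphisms of triangles gives $v\colon E\to C$ with $va=f$, and the approximation property gives $v=fw$.
\item Right minimality makes $wa$ an automorphism of $X_0$, so $a$ is a split monomorphism and $hu=0$.
\item Hence $u=gu'$, and $u'$ factors through $f$, so $u=0$.
\end{enumerate}
Thus $Y\in\mathcal{F}(\mathcal{S})^{\perp}=\mathcal{S}^{\perp}=\{0\}$ and $C\cong X_0\in\mathcal{F}(\mathcal{S})$. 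This is the triangulated Wakamatsu lemma, as in Iyama--Yoshino.

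This proves the corollary directly and yields $\Sigma(\mathcal{S})\subseteq\mathcal{F}(\mathcal{S})$ only as a byproduct. So here the $\Sigma$-hypothesis is no easier than the conclusion. Neither condition (\ref{small-module-large-module}) nor the Canonaco--Neeman--Stellari machinery is needed.
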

\begin{proof}
 It is known that, if $A$ is of finite type, then any indecomposable module is finite dimensional and any module is a direct sum of indecomposable modules.  Thus condition (\ref{small-module-large-module}) holds for representation-finite self-injective algebras and our conclusion follows from Theorem \ref{wsms and sms}.
\end{proof}

If $A$ is a representation-finite self-injective algebra, we proved \cite[Theorem 3.1]{GLYZ} that a family of objects $\mathcal{S}$ is a simple-minded system in $A$-$\stmod$ if and only if  $\mathcal{S}$ is an orthogonal system, $\mathcal{S}$  is Nakayama-stable (that is, $\nu(\mathcal{S})=\mathcal{S}$), and the cardinality of $\mathcal{S}$ is equal to the number of non-isomorphic simple $A$-modules. In the following proposition, we shall replace cardinality condition with the condition that 
$\Omega(\mathcal{S})$ is contained in  $\mathcal{F}(\mathcal{S})$.

\begin{Prop} \label{sms-omega}
Let $A$ be a representation-finite self-injective algebra and  $\mathcal{S}$ a family of objects in $A$-$\stmod$. Then $\mathcal{S}$  is a simple-minded system  in  $A$-$\stmod$ if and only if 
\begin{enumerate}[$(1)$]
	\item $\mathcal{S}$ is an  orthogonal system  in  $A$-$\stmod$.
	\item $\mathcal{S}$  is Nakayama-stable. 
	\item  $\Omega(\mathcal{S})$ is contained in  $\mathcal{F}(\mathcal{S})$.
\end{enumerate}
\end{Prop}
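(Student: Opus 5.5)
Necessity and sufficiency are handled separately, and sufficiency is reduced to Theorem \ref{wsms and sms}, more precisely to Corollary \ref{wsms and sms-1}.

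\emph{Necessity.} Suppose $\mathcal{S}$ is a simple-minded system. Condition (1) holds by definition. Condition (3) is immediate, because $\mathcal{F}(\mathcal{S})=A$-$\stmod$ contains every object, in particular all of $\Omega(\mathcal{S})$. For condition (2) I would cite \cite[Theorem 3.1]{GLYZ}: over a representation-finite self-injective algebra, every simple-minded system is Nakayama-stable. To see this directly, note that $\nu$ is a triangle autoequivalence of $A$-$\stmod$. Hence $\nu(\mathcal{S})$ is again a simple-minded system. In $A$-$\stmod$ the functor $\nu\Omega$ is the Serre functor, up to a shift, and the members of a simple-minded system are determined by this kind of duality. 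In any case this direction is covered by the cited theorem.

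\emph{Sufficiency.} Assume (1)--(3). The first step applies Lemma \ref{omega-self-injective-1}: since $\mathcal{S}$ is a Nakayama-stable orthogonal system with $\Omega(\mathcal{S})\subseteq\mathcal{F}(\mathcal{S})$, we get $\Sigma(\mathcal{S})\subseteq\mathcal{F}(\mathcal{S})$. By Corollary \ref{omega-self-injective-2}, $\mathcal{F}(\mathcal{S})$ is then a triangulated subcategory of $A$-$\stmod$. Because $A$ is representation-finite, there are finitely many indecomposables, so an orthogonal system of bricks is finite. Condition (\ref{small-module-large-module}) also holds, as explained in the proof of Corollary \ref{wsms and sms-1}. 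By Theorem \ref{wsms and sms}, it therefore remains only to show that $\mathcal{S}$ is a weakly simple-minded system, that is, $\mathcal{S}^{\perp}=\{0\}$ in $A$-$\stmod$.

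\emph{Main obstacle: showing $\mathcal{S}^{\perp}=\{0\}$.} This is the step I expect to be hardest. My first attempt is to use the cardinality criterion of \cite[Theorem 3.1]{GLYZ}: it suffices to show that $|\mathcal{S}|$ equals the number of simple $A$-modules. One could try to compare Grothendieck groups. Since $\mathcal{F}(\mathcal{S})$ is a thick subcategory (it is triangulated and closed under summands by Remark \ref{F-S-properties}) generated by the orthogonal system $\mathcal{S}$, the system $\mathcal{S}$ behaves like a simple-minded system in $\mathcal{F}(\mathcal{S})$.

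The cleaner route is a torsion-pair argument. The subcategory $\mathcal{F}(\mathcal{S})$ is functorially finite, since $A$-$\stmod$ has only finitely many indecomposables. So for any $X$ there is a right $\mathcal{F}(\mathcal{S})$-approximation $F\to X$, and it completes to a triangle $F\to X\to Y\to F[1]$. Using that $\mathcal{F}(\mathcal{S})$ is triangulated and closed under summands, a standard Wakamatsu-type argument with minimal approximations gives $Y\in\mathcal{F}(\mathcal{S})^{\perp}=\mathcal{S}^{\perp}$. Thus $(\mathcal{F}(\mathcal{S}),\mathcal{S}^{\perp})$ is a stable t-structure, and $\mathcal{S}^{\perp}$ is a thick subcategory as well. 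Nakayama-stability together with Serre duality $\StHom(X,Y)\cong \D\StHom(Y,\nu\Omega X)$ gives ${}^{\perp}\mathcal{S}=\mathcal{S}^{\perp}$. So $A$-$\stmod$ decomposes as the orthogonal sum $\mathcal{F}(\mathcal{S})\oplus\mathcal{S}^{\perp}$ of triangulated subcategories.

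To rule out a nonzero summand $\mathcal{S}^{\perp}$, I would use that the stable Auslander--Reiten quiver of a representation-finite self-injective algebra is connected (of tree class Dynkin). An orthogonal decomposition into two nonzero thick pieces would split the AR-quiver into two disjoint unions of components. Since $\mathcal{S}\neq\emptyset$, the piece $\mathcal{F}(\mathcal{S})$ is nonzero, so $\mathcal{S}^{\perp}=\{0\}$. (If $\mathcal{S}=\emptyset$ is allowed, that degenerate case must be excluded separately, or the statement read with $A$-$\stmod\neq 0$ forcing $\mathcal{S}\neq\emptyset$.) This makes $\mathcal{S}$ weakly simple-minded, and Corollary \ref{wsms and sms-1} finishes the proof.
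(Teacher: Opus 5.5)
Your proof is correct, but it takes a different route from the paper's.

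The paper never passes through weak generation or Theorem~\ref{wsms and sms}. After Lemma~\ref{omega-self-injective-1} it argues locally in the AR-quiver:
\begin{enumerate}[$(1)$]
\item Since $\tau S=\nu\Omega^{2}S\in\mathcal{F}(\mathcal{S})$, two-out-of-three (Corollary~\ref{two-out-of-three}) puts the middle term of the AR triangle ending at $S$ into $\mathcal{F}(\mathcal{S})$.
\item By Remark~\ref{F-S-properties}, every summand of that middle term is then in $\mathcal{F}(\mathcal{S})$; dually for successors.
\item Iterating and using connectedness of the AR-quiver gives $\mathcal{F}(\mathcal{S})=A$-$\stmod$.
\end{enumerate}

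You instead build the stable t-structure $(\mathcal{F}(\mathcal{S}),\mathcal{S}^{\perp})$ from functorial finiteness and a Wakamatsu argument. You then use that $\mathcal{F}(\mathcal{S})$ is stable under the Serre functor $\nu\Omega$ to get $\mathcal{S}^{\perp}={}^{\perp}\mathcal{S}$. So the torsion triangles split, $A$-$\stmod$ is an orthogonal sum of two thick subcategories, and connectedness is invoked only at the end.

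Both routes rest on the same inputs: triangulatedness and $\nu$-stability of $\mathcal{F}(\mathcal{S})$, and connectedness of ${}_{s}\Gamma_{A}$. The paper's argument is more elementary and purely local. That local propagation is also what it reuses for Theorem~\ref{BGA-sms} in infinite type, where functorial finiteness of $\mathcal{F}(\mathcal{S})$ is not available a priori. Yours gives the stronger structural statement that $\mathcal{S}^{\perp}$ is a complementary orthogonal summand.

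A few smaller points.
\begin{enumerate}[$(1)$]
\item Your final appeal to Corollary~\ref{wsms and sms-1} is redundant. Once $\mathcal{S}^{\perp}=\{0\}$, your approximation triangle $F\to X\to Y\to F[1]$ has $Y=0$. So $X\cong F\in\mathcal{F}(\mathcal{S})$ directly, with no need for condition~(\ref{small-module-large-module}).
\item In the connectivity step, state explicitly that an irreducible map between non-projective indecomposables is nonzero in $A$-$\stmod$, since it cannot factor through a projective.
\item Also state that both summands are $\tau$-stable, being $\Omega$- and $\nu$-stable. This is what handles the case where ${}_{s}\Gamma_{A}$ has tree class $A_{1}$ and hence no arrows.
\item Like the paper, you tacitly need $A$ connected and $\mathcal{S}\neq\emptyset$.
\item Your heuristic for Nakayama-stability in the necessity part is not an argument. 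The citation [GLYZ, Theorem~3.1] carries it, exactly as in the paper.
\end{enumerate}
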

	
\begin{proof}
It  follows from \cite[Theorem 3.1]{GLYZ} that necessary condition holds. We show the sufficient one. Let $S$ be an object of $\mathcal{S}$. Consider an almost split sequence as follows.
\[\tau S\xrightarrow{}\oplus_{i=1}^{m}X_{i}\xrightarrow{}S\xrightarrow{}  \nu\Omega(S).\]
By  Lemma \ref{two-out-of-three}, we know that $\oplus_{i=1}^{m}X_{i}$ is in  $\mathcal{F}(\mathcal{S})$. Since $\mathcal{F}(\mathcal{S})$ is closed under direct summands, each $X_{i}$ is in $\mathcal{F}(\mathcal{S})$. Thus each object which has an irreducible map to $S$ is in $\mathcal{F}(\mathcal{S})$.
		
Since $\Omega(\mathcal{S})$ is contained  in  $\mathcal{F}(\mathcal{S})$, by Lemma \ref{omega-self-injective-1}, $\Sigma(\mathcal{S})$ is contained in  $\mathcal{F}(\mathcal{S})$. Since $\Sigma(\mathcal{S})$ is contained in  $\mathcal{F}(\mathcal{S})$, it can be  proved similarly that each object that receives an irreducible map from $\mathcal{S}$ is in $\mathcal{F}(\mathcal{S})$.  Therefore each object in any given connected subquiver of AR-quiver $\Gamma_{A}$ is in $\mathcal{F}(\mathcal{S})$. It is well-known that $A$ is of finite representation type if and only if AR-quiver $\Gamma_{A}$ of $A$ is a finite connected quiver. Therefore $A$-$\ind\subset \mathcal{F}(\mathcal{S})$ and $\mathcal{F}(\mathcal{S})=A$-$\m$. Hence $\mathcal{S}$ is a simple-minded system in $A$-$\stmod$.
\end{proof}
\subsection{A  characterization of simple-minded systems over a domestic Brauer graph algebra} In this subsection, we provide a  characterization of simple-mind systems over a domestic Brauer graph algebra.
Instead of giving a detailed introduction to domestic Brauer graph algebra, we  shall  simply recall some related well-known results.  Recall that a domestic Brauer graph algebra $A$ is 1-domestic or 2-domestic. The stable AR-quiver  $_{s}\Gamma_{A}$ of $A$ consists of 
$m$ Euclidean components of the form $\mathbb{Z}\widetilde{A}_{p,q}$,
$m$ quasi-tubes of the form $\mathbb{Z}A_{\infty}/\langle\tau^{p}\rangle$,
$m$ quasi-tubes of the form $\mathbb{Z}A_{\infty}/\langle\tau^{q}\rangle$,
as well as  infinitely many homogeneous tubes of the form $\mathbb{Z}A_{\infty}/\langle\tau\rangle$, where $m=1,2$. Please refer to \cite{BoS,D1,S}  for more details about domestic Brauer graph algebras.

We recall some concepts as follows.
Let $A$ be an artin algebra and $\mathcal{C}$  a quasi-tube in the AR-quiver $\Gamma_{A}$. An indecomposable $A$-module $X$ is called a {\bf  quasi-simple} of $\mathcal{C}$, provided that it is not projective and lies at the end of $\mathcal{C}$.  We say a {\it sectional path} is a path $\cdots \to M_i \to M_{i+1} \to M_{i+2} \to \cdots$ in the AR-quiver satisfying $M_i\ncong \tau(M_{i+2})$. For a quasi-simple $X$ and any natural number $r\geq1$, up to isomorphism, there is the unique infinite sectional path  in $\mathcal{C}$ starting at $X$: 
\[
X = X(1)\rightarrow X(2)\rightarrow \cdots \rightarrow X(r)\rightarrow X(r + 1)\rightarrow \cdots,
\]
dually, up to isomorphism, there is the unique infinite sectional path in $\mathcal{C}$ ending at $X$:
\[\cdots \rightarrow [r+1]X \rightarrow [r]X \rightarrow \cdots \rightarrow [2]X \rightarrow [1]X=X.
\]
An $A$-module $X$ is said to be {\bf non-periodic} if there is no positive integer $n$ such that $\Omega^{n}(X)\cong X$. 

\begin{Def}{\rm(\cite[Chapter X, Definition 1.3]{SS})}\label{ray-point} Let $A$ be an algebra and $\mathcal{C}$  a component of the AR-quiver $\Gamma_{A}$ of $A$.  A {\bf ray point}	 of $\mathcal{C}$ is defined  to be a point in $\mathcal{C}$ such that there exists an infinite sectional path in $\mathcal{C}$
\[X=X(1)\xrightarrow{} X(2) \xrightarrow{}X(3)\xrightarrow{} \cdots \xrightarrow{}X(m)\xrightarrow{} \cdots \]
starting at $X$	 and containing all sectional paths starting at $X$. The corresponding  $A$-module $X$ is called a {\bf ray module}. The {\bf coray point} and {\bf coray module} are defined analogously.
\end{Def}
	
Note that a quasi-simple in a quasi-tube is a coray, but a coray is not necessarily a quasi-simple.   Let $A$ be a self-injective algebra and take an almost split sequence $0\xrightarrow{}X\xrightarrow{}Y\xrightarrow{f}\tau^{-1}(X)\xrightarrow{} 0$. Recall that $\alpha(\tau^{-1}(X))$ denotes the number $n$ of direct summands in a decomposition of $Y=\oplus_{i=1}^{n}Y_{i}$ into indecomposable modules.

\begin{Prop}{\rm(\cite[Corollary 3.5]{K})} \label{quasi-simple-1}Let $A$ be a self-injective artin algebra, and let  $0\xrightarrow{}X\xrightarrow{}Y\xrightarrow{f}Z\xrightarrow{} 0$ be a short exact sequence in $A$-$\m$,  which is not almost split. Suppose that one of the modules  $Y$ and $Z$ is indecomposable and $f$ is irreducible. Then the following conditions are equivalent for a natural number $n>1:$
\begin{enumerate}[$(1)$]
	\item $\alpha(\tau^{-1}(X))=n.$	
	\item $X$ is simple and $\rad P/\soc P$ decomposes into $n$ summands, where $P$ denotes the projective cover of $X$.
	\item $X$ is simple and $\rad I/\soc I$ decomposes into $n$ summands, where $I$ denotes the injective envelope  of $X$.
\end{enumerate}
Moreover, if $X$ satisfies these conditions, then $Y$ is either indecomposable projective or isomorphic to the radical of some indecomposable projective module.
\end{Prop}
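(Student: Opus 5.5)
Since this result is quoted from Krause \cite[Corollary 3.5]{K}, the plan is to reconstruct the argument from standard Auslander--Reiten theory of self-injective algebras. The organising idea is that an irreducible epimorphism whose kernel sequence is not almost split is very rigid. Its kernel must be ``small'' (a simple module), and the almost split sequence starting at that simple is controlled by the standard almost split sequence attached to an indecomposable projective-injective module.

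The first step is to show that (1) forces $X$ to be simple.
\begin{enumerate}[(a)]
\item Complete the irreducible map $f\colon Y\to Z$ to the almost split sequence $0\to\tau Z\to Y\oplus Y'\to Z\to 0$. Compare it with the given sequence via the pullback/pushout along the inclusion $X\to Y$.
\item Since the given sequence is not almost split and one of $Y,Z$ is indecomposable, the factorisation properties of irreducible maps should force the inclusion $X\to Y$ to be left minimal almost split for a module of length one. Equivalently, $X$ has no proper nonzero submodule through which the sequence could be refined.
\item Concretely, take a nonzero proper submodule $X'\subset X$ and push out along $X\to X/X'$. Using irreducibility of $f$, this yields a factorisation of $f$ with neither factor split, which is a contradiction. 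Hence $X$ is simple, with projective cover $P$ and, $A$ being self-injective, injective envelope $I=\nu P$.
\end{enumerate}
The same argument is where the hypothesis $n>1$ enters: when $n=1$ the sequence degenerates and no simplicity conclusion is needed.

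The second step is to compute $\alpha(\tau^{-1}X)$ for simple $X$ over a self-injective algebra. The standard almost split sequence is
\[0\to \rad I\to I\oplus \rad I/\soc I\to I/\soc I\to 0,\]
and $\soc I=X$. Applying the stable auto-equivalences $\Omega^{\pm1}$ and $\nu$, which preserve almost split sequences in $A$-$\stmod$ up to projective summands, transports this sequence to the almost split sequence starting at $X$. Its number of non-projective indecomposable middle summands is then the number of summands of $\rad I/\soc I$. The bookkeeping of the projective summand $I$ and of the possible extra summand is what distinguishes $n$ from $n\pm1$, and this needs care. This gives (1)$\Leftrightarrow$(3). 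Since $I\cong\nu P$ and $\nu$ is an equivalence sending $\rad P/\soc P$ to $\rad I/\soc I$, we also get (2)$\Leftrightarrow$(3).

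For the final assertion, the irreducible monomorphism $X\to Y$ with $X$ simple must be one of the irreducible maps leaving $X=\soc P$ in the standard sequence above. The indecomposable summand $Y$ of the middle term is therefore either the projective-injective $P$ itself, or (after the $\Omega$-shift) the radical of an indecomposable projective.

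I expect the main obstacle to be the first step, deducing simplicity of $X$ from the mere failure of the sequence to be almost split together with irreducibility of $f$. There I would first try to use Liu's degree theory of irreducible maps, exploiting that the kernel of an irreducible epimorphism of finite left degree is controlled by the sequence's position relative to the projective-injective modules.
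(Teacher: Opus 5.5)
The paper does not prove this statement; it quotes it from Krause's Corollary 3.5. Your proposal therefore has to stand on its own, and it fails at the decisive step, namely $(1)\Rightarrow$ ``$X$ is simple''. Write $\delta$ for the given sequence. In (c), the factorisation $Y\to Y/X'\to Z$ of $f$ cannot give a contradiction. The first map has nonzero kernel $X'$, so it is never a split monomorphism, and irreducibility of $f$ then simply forces $Y/X'\to Z$ to be a split epimorphism, i.e.\ the pushout of $\delta$ along $X\to X/X'$ splits. That is exactly what irreducibility predicts.

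The argument in (c) uses only the irreducibility of $f$. Contrary to what you say, it never uses $n>1$, and it never uses that $\delta$ is not almost split. So it would show that every irreducible epimorphism has simple kernel, which is false. Take a non-simple module $E$ at the mouth of a homogeneous tube, e.g.\ a two-dimensional band module over $k[x,y]/(x^2,y^2)$. Then $0\to E\to E[3]\to E[2]\to 0$ has $E[3]\to E[2]$ irreducible and $E[3]$ indecomposable, and it is not almost split. Here $\alpha(\tau^{-1}E)=1$, so this contradicts your argument, not the proposition.

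What irreducibility of $f$ really gives is weaker: for every $h\colon X\to W$, either $h_*\delta$ splits or the inclusion $X\to Y$ factors through $h$. Some argument that genuinely exploits $\alpha(\tau^{-1}X)\geq 2$ is still needed on top of this, and that is the real content of Krause's result. Pointing to Liu's degree theory does not supply it.

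Your Step 2 is essentially sound once set up correctly. With $P=P(X)$ one has $\rad P=\Omega X$, so applying $\Omega^{-1}$ to the standard sequence for $P$ gives the Auslander--Reiten triangle starting at $X$, with middle term $\Omega^{-1}(\rad P/\soc P)$. A projective middle summand occurs only if $X\cong\rad Q$. That forces $Q=I(X)$ to have length $2$, so $\rad I/\soc I=0$ and $\alpha(\tau^{-1}X)=1$; hence there is no off-by-one issue when $n>1$. The equivalence $(2)\Leftrightarrow(3)$ via $\nu$ is fine.

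The argument for the final assertion is also wrong. The inclusion $X\to Y$ need not be irreducible. Take $A=k[x,y]/(x^2,y^2)$ and the sequence $0\to\soc A\to A\to A/\soc A\to 0$. All hypotheses hold with $n=2$, since the almost split sequence starting at $k$ is $0\to k\to (A/\soc A)^2\to\Omega^{-2}k\to 0$. Yet $\soc A\hookrightarrow A$ factors through $\rad A$ with neither factor split. The same happens for $0\to\soc A\to\rad A\to\rad A/\soc A\to 0$, where $Y=\rad A$: every irreducible map out of $k$ ends in $A/\soc A\not\cong\rad A$.

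You are also conflating two almost split sequences. $P$ and $\rad P/\soc P$ are middle terms of the sequence starting at $\rad P=\Omega X$, not of the one starting at $X$. The irreducible maps out of $X$ end in summands of $\Omega^{-1}(\rad P/\soc P)$, which in the example is $A/\soc A$; this is neither projective nor a radical of a projective. So the description of $Y$ needs its own argument, using that $X$ is simple and that $f$ is an irreducible epimorphism with kernel $X$, not that $X\to Y$ is irreducible.
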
	
	
The following proposition provides a connection between different connected  AR-components of the AR-quiver of a representation-infinite self-injective algebra.

\begin{Prop}\label{s.m.s.-maximal-universal-module}
Let $A$ be representation-infinite self-injective algebra. Given a triangle 	
\begin{align}\label{tri-1}
X\xrightarrow{}Y\xrightarrow{\underline{f}}Z\xrightarrow{} X[1]
\end{align}
in $A$-$\stmod$, where both $Y$ and $Z$ are indecomposable non-projective, and $f:Y\rightarrow Z$ is an irreducible map in $A$-$\m$. Then  $X$ {\rm(}resp. $X[1]${\rm)} is a coray point of some stable connected AR-component. 
\end{Prop}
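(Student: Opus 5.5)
The plan is to lift the triangle (\ref{tri-1}) to a short exact sequence in $A$-$\m$ and then to control the middle term of the Auslander--Reiten sequence at the third vertex, using Proposition \ref{quasi-simple-1} and its dual. Since $f\colon Y\to Z$ is irreducible between indecomposable non-projective modules, $f$ is either a monomorphism or an epimorphism. The two cases match the two alternatives in the statement, so I treat them separately.

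\emph{Case 1: $f$ is an epimorphism.} Put $X'=\Ker f$. The short exact sequence $0\to X'\to Y\xrightarrow{f} Z\to 0$ induces a triangle $X'\to Y\to Z\to X'[1]$ in $A$-$\stmod$. Comparing it with (\ref{tri-1}) gives $X\cong X'$ in $A$-$\stmod$. It is classical (Auslander--Reiten--Smal\o) that the kernel of an irreducible epimorphism with indecomposable domain is indecomposable, so $X$ is indecomposable up to projective summands. If the sequence were almost split, then $X=\tau Z$ and $\alpha(Z)=1$. Otherwise Proposition \ref{quasi-simple-1} applies with $n>1$ excluded unless $X$ is simple, and in that exceptional case $Y$ is projective or the radical of a projective. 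In both situations I aim to show that, once projective vertices are deleted, $\alpha(\tau^{-1}X)=1$ in the stable component containing $X$. The dual statement, applied with the roles of predecessors and successors exchanged, should then give $\alpha(X)=1$, i.e. $X$ has a unique immediate predecessor $X_2$ in ${}_s\Gamma_A$.

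\emph{Case 2: $f$ is a monomorphism.} Put $C=\Coker f$. The sequence $0\to Y\to Z\to C\to 0$ gives the triangle $Y\to Z\to C\to Y[1]$, so $X[1]\cong C$ in $A$-$\stmod$. The cokernel of an irreducible monomorphism with indecomposable codomain is indecomposable, and the dual of Proposition \ref{quasi-simple-1} (using condition (3), about $\rad I/\soc I$) gives the analogous conclusion for $C=X[1]$.

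\emph{Building the coray.} Next I iterate. From $\alpha(X)=1$ we get an irreducible map $X_2\to X$ with $X_2$ indecomposable. By the Auslander--Reiten sequence ending at $\tau^{-1}X_2$, or equivalently by the structure of the mesh, there is an irreducible epimorphism or monomorphism between indecomposable non-projectives to which the same argument applies again. This produces $\alpha(X_2)\le 2$, with the extra arrow coming from $\tau X$, and inductively an infinite sectional path $\cdots\to X_3\to X_2\to X$. Every sectional path ending at $X$ is forced into this path, because each vertex has only one predecessor besides the $\tau$-translate of the previous vertex. The path cannot close up or terminate, because $A$ is representation-infinite and connected, so by Auslander's theorem no component of $\Gamma_A$ is finite; a terminating path would force a finite stable component.

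\emph{Main obstacle.} The delicate step is the exceptional case of Proposition \ref{quasi-simple-1}, where $\alpha(\tau^{-1}X)=n>1$ with $X$ simple. There one must check that the extra summands are projective, or radicals of projectives, and hence disappear or become coray-compatible in the stable AR-quiver. I would handle this by using $\rad P/\soc P$: removing the projective vertex $P$ from the mesh shows that the neighbours of $X$ in ${}_s\Gamma_A$ are the summands of $\rad P/\soc P$ on exactly one side. Ensuring that the induction propagates, namely that each subsequent irreducible map along the path again satisfies the hypotheses, is the second point needing care.
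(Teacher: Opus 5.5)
Your reduction follows the paper's mechanism, organized more directly. You use that an irreducible $f$ is a monomorphism or an epimorphism, so $X\cong\Ker f$ or $X[1]\cong\Coker f$ in $A$-$\stmod$, both indecomposable, and then apply Proposition~\ref{quasi-simple-1} or its dual. Two steps fail, however.

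First, you try to absorb the exceptional case $\alpha(\tau^{-1}X)=n>1$ by deleting projective vertices. That cannot work. A projective summand $Q$ in the middle term of the almost split sequence starting at the simple module $X$ would force $X\cong\rad Q$, hence $\rad Q=\soc Q$ and $n=1$. So all $n\geq 2$ summands are non-projective, and $X$ keeps $n$ successors in ${}_s\Gamma_A$. The observation you are missing, which is what the paper uses, is that this case is ruled out by the hypotheses. Since $Y$ is not projective, $Y\cong\rad I$ for an indecomposable projective $I$. Then $X$ is a simple submodule of $I$, i.e.\ $X=\soc I$, and $Z\cong\rad I/\soc I$ has $n\geq 2$ summands by condition (3). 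This contradicts the indecomposability of $Z$.

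Second, and more seriously, \emph{Building the coray} is unsupported. Knowing that $X$ has a single neighbour $X_2$ in ${}_s\Gamma_A$ says nothing about $\alpha(X_2)$. The maps in the mesh ending at $X$ only return information about $X$ and $\tau X$: if that mesh has no projective vertex, $X_2\to X$ is an epimorphism with kernel $\tau X$, and Proposition~\ref{quasi-simple-1} just gives $\alpha(X)=1$ again. Nothing in your setup produces an irreducible map with kernel or cokernel $X_2$. Your claim that a terminating sectional path forces a finite component is also false: in $\mathbb{Z}\Delta$ with $\Delta$ a finite tree, every sectional path is finite.

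In fact the step cannot be repaired in the stated generality. Let $A$ be the trivial extension of the path algebra of the $\tilde D_4$-quiver with arrows $i\to 0$ ($1\leq i\leq 4$). Then $P_0$ has Loewy layers $S_0\mid S_1\oplus S_2\oplus S_3\oplus S_4\mid S_0$, and $P_i$ is uniserial $S_i\mid S_0\mid S_i$. Put $R=\rad P_0$. The composite $f\colon R\to R/\soc R\to S_4$ is irreducible. A dimension count shows that $0\to \Ker f\to R\to S_4\to 0$ is almost split, with $\Ker f=\Omega^2S_4=\tau S_4$. All hypotheses hold. Yet $R$ has exactly the four stable successors $S_1,\dots,S_4$, each with $R$ as its only predecessor. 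So the stable component has tree class $\tilde D_4$, contains no infinite sectional path, and $X=\tau S_4$ is not a coray point.

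What your argument and the paper's actually establish is $\alpha(\tau^{-1}X)=1$. The paper then identifies this with being a coray point. That is legitimate only when the stable components are known to have tree class of type $A$. This holds for the domestic Brauer graph algebras where the proposition is applied: their components are $\mathbb{Z}\tilde A_{p,q}$, in which every vertex has two neighbours, and tubes, in which $\alpha=1$ means lying on the mouth. That structural input, not an induction along the path, is what completes the proof.
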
	
\begin{proof}Note that a point is a ray point if and only it is a coray point in the stable	 AR-quiver of a self-injective algebra. Let $X$ be a point in the stable connected component $_{s}\Gamma_{1}$ of the stable AR-quiver $_{s}\Gamma_{A}$. We only  prove that $X$  is a  coray point of $_{s}\Gamma_{1}$, since $[1]: A$-$\stmod\rightarrow A$-$\stmod$ is an equivalence.  
We assume that the triangle  (\ref{tri-1}) is induced by a short exact sequence as follows.
\begin{align}\label{ses-1}
	0\xrightarrow{}X\xrightarrow{}Y\oplus P\xrightarrow{(f,h)}Z\xrightarrow{} 0.
\end{align}
Where $P$ is a projective $A$-module. If the sequence   (\ref{ses-1}) is  an almost split sequence, then $Y$ is  isomorphic to $\rad(P)/\soc(P)$ and $P$ is indecomposable. Since $Y$ is indecomposable,  $\alpha(\tau^{-1}(X))=2$. Thus $X$ is a coray point in $_{s}\Gamma_{1}$. 

Now we assume that the short exact sequence  (\ref{ses-1}) is not an almost split sequence.  

\noindent{\it Case one: $P$ is isomorphic to zero.}

We shall show that $X$ is a coray point in $_{s}\Gamma_{1}$. On the contrary, we assume that $X$ is not  a  coray point of $_{s}\Gamma_{1}$.  Let $\alpha(\tau^{-1}(X))=n>1$.   By Proposition \ref{quasi-simple-1}, $Y$ is either indecomposable projective or isomorphic to the radical of some indecomposable projective module. The former case contradicts the condition that $Y$ is non-projective. For the latter case, $Y$ is isomorphic to the radical $\rad P$ of an indecomposable projective $P$. By the equivalent conditions of Proposition \ref{quasi-simple-1},  $X$ is  a simple module and $X\cong\soc P$. Therefore $Z$ is isomorphic to $\rad P/\soc P$ and it is decomposable (with $n$ direct summands). This contradicts the condition that $Z$ is indecomposable. Hence $\alpha(\tau^{-1}(X))=1$ and then $X$  is a  coray point of $_{s}\Gamma_{1}$.

\noindent{\it Case two: $P$ is not isomorphic to zero.}

Since $f$ is an irreducible map, $f$ is a monomorphism or an epimorphism.  
If $f$ is a monomorphism,  then we take a short exact sequence as follows.
\begin{align}\label{ses-3}
0\xrightarrow{}Y\xrightarrow{f}Z\xrightarrow{} \Coker(f) \xrightarrow{}0.
\end{align}
It can be proved dually that $\Coker(f)$ is a ray point in $_{s}\Gamma_{1}$. Considering the triangle induced by short exact sequence (\ref{ses-3}), we have $X[1]\cong\Coker(f)$ in $A$-$\stmod$. Thus $X$ is a coray  point in $_{s}\Gamma_{1}$.

If f is an epimorphism, then we take a short exact sequence as follows.
\begin{align}\label{ses-4}
	0\xrightarrow{}\Ker(f)\xrightarrow{}Y\xrightarrow{f}Z\xrightarrow{} 0.
\end{align}
It is similar to the Case one that $\Ker(f)$ is a coray  point in $_{s}\Gamma_{1}$. Considering the triangle induced by short exact sequence (\ref{ses-4}), we have $X\cong \Ker(f)$ in $A$-$\stmod$. Thus $X$ is a coray point in $_{s}\Gamma_{1}$.
\end{proof}

\begin{Them} \label{BGA-sms}
Let $A$ be a domestic Brauer graph algebra and  $\mathcal{S}$ an  orthogonal system in $A$-$\stmod$.  Then $\mathcal{S}$ is a simple-minded system $A$-$\stmod$  if and only if $\mathcal{S}$ satisfies the following conditions.
\begin{enumerate}[$(1)$]
\item $\mathcal{S}$ contains at least one non-periodic $A$-module;		
\item $\Omega(\mathcal{S})$ is contained  in  $\mathcal{F}(\mathcal{S})$.
\end{enumerate}
\end{Them}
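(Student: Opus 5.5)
Necessity and sufficiency are handled separately; the substance is in sufficiency, where the plan is to reduce to Theorem \ref{sufficient-condition-of-s.m.s.}.

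\textbf{Necessity.} If $\mathcal{S}$ is a simple-minded system, then $\mathcal{F}(\mathcal{S})=A$-$\stmod$, so $\Omega(\mathcal{S})\subseteq\mathcal{F}(\mathcal{S})$ trivially. Moreover, by Theorem \ref{sufficient-condition-of-s.m.s.}, $\mathcal{S}$ meets every Euclidean component $\mathbb{Z}\widetilde{A}_{p,q}$. Every module in such a component is non-periodic, since these components are not stable tubes and $\Omega$ acts on them without finite orbits. Hence (1) holds.

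\textbf{Sufficiency.} By Theorem \ref{sufficient-condition-of-s.m.s.} it suffices to prove two things: $\mathcal{S}$ meets every Euclidean component, and every quasi-simple of every non-homogeneous quasi-tube lies in $\mathcal{F}(\mathcal{S})$. The plan proceeds as follows.

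First, by Lemma \ref{Omega-F(S)} and Corollary \ref{two-out-of-three}, condition (2) gives $\Omega(\mathcal{F}(\mathcal{S}))\subseteq\mathcal{F}(\mathcal{S})$, and $\mathcal{F}(\mathcal{S})$ satisfies two-out-of-three on triangles. In particular $\mathcal{F}(\mathcal{S})$ is closed under $\Omega$, extensions and direct summands (Remark \ref{F-S-properties}).

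Second, take a non-periodic $S\in\mathcal{S}$; it lies in a Euclidean component $\mathcal{C}$. Using the almost split triangle $\tau S\to E\to S\to\tau S[1]$ with $\tau=\Omega^2\nu$, we want $\tau S\in\mathcal{F}(\mathcal{S})$, so that two-out-of-three puts the middle term $E$, and hence its summands, into $\mathcal{F}(\mathcal{S})$. Here $\Omega^2S\in\mathcal{F}(\mathcal{S})$. For $\nu$, we use that for Brauer graph algebras $\nu$ is the identity on the stable category up to isomorphism of objects (Brauer graph algebras are symmetric), so $\tau=\Omega^2$. Propagating along irreducible maps as in the proof of Proposition \ref{sms-omega}, all of $\mathcal{C}$ lies in $\mathcal{F}(\mathcal{S})$. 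This uses the two-out-of-three property to move both forward and backward along meshes, since $\tau^{-1}X=\Omega^{-2}X$ appears as the third term of a triangle whose other two terms are already in $\mathcal{F}(\mathcal{S})$.

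Third, we pass from $\mathcal{C}$ to the quasi-tubes and to the other Euclidean component (when $m=2$). For this we use Proposition \ref{s.m.s.-maximal-universal-module}. For an irreducible map $f\colon Y\to Z$ between indecomposables of $\mathcal{C}$, complete $\underline{f}$ to a triangle $X\to Y\to Z\to X[1]$. Then $X\in\mathcal{F}(\mathcal{S})$ by two-out-of-three, and $X$ is a coray point of some stable component. Choosing $f$ appropriately, namely irreducible monomorphisms or epimorphisms at the mouths where the ``long'' rays of $\mathbb{Z}\widetilde{A}_{p,q}$ connect to the tubes via hooks/cohooks of string combinatorics, the cokernels and kernels should be exactly the quasi-simples of the tubes $\mathbb{Z}A_\infty/\langle\tau^p\rangle$ and $\mathbb{Z}A_\infty/\langle\tau^q\rangle$, or modules in the second Euclidean component. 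Applying powers of $\Omega$ then gives all $\tau$-orbits of the quasi-simples, and Theorem \ref{sufficient-condition-of-s.m.s.} finishes the proof.

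\textbf{Main obstacle.} The third step is the hard part: identifying which coray points arise and showing they exhaust all quasi-simples of the exceptional tubes and reach the other Euclidean component. I would first try the explicit string-module description of domestic Brauer graph algebras from \cite{BoS}, computing $\Ker f$ and $\Coker f$ for canonical irreducible maps (adding or deleting hooks) in $\mathcal{C}$, and comparing the result with the known list of quasi-simples.
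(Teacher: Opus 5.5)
Your outline follows the paper's route. Necessity comes from Theorem~\ref{sufficient-condition-of-s.m.s.}. For sufficiency you use $\Omega$- and $\Sigma$-closure of $\mathcal{F}(\mathcal{S})$, $\tau=\Omega^{2}$ since $A$ is symmetric, mesh propagation through the Euclidean component $\mathcal{C}$ containing a non-periodic member of $\mathcal{S}$, and then cones of irreducible maps via Proposition~\ref{s.m.s.-maximal-universal-module}. However, the decisive step is not carried out. You never show that these cones hit every exceptional quasi-tube, i.e.\ that they exhaust the quasi-simples up to $\tau$. You only say that a string-combinatorial computation ``should'' give this. That exhaustion is exactly what the paper imports from \cite[Lemma 3.8]{Z}. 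Without it, or an actual verification, the sufficiency direction is not proved.

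Two further points need correcting.

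\emph{The second Euclidean component.} In the $2$-domestic case you cannot reach it through such cones. By Proposition~\ref{s.m.s.-maximal-universal-module} each cone is a coray point, and $\mathbb{Z}\widetilde{A}_{p,q}$ has none, since every vertex has two immediate predecessors. So the cones only land on mouths of tubes. The second component comes for free instead: ${}_{s}\Gamma_{2}=\Omega({}_{s}\Gamma_{1})$ by Remark~\ref{Euclidean-comp}, and $\mathcal{F}(\mathcal{S})$ is closed under $\Omega$. Hence $\mathcal{C}\subseteq\mathcal{F}(\mathcal{S})$ already gives $\Omega(\mathcal{C})\subseteq\mathcal{F}(\mathcal{S})$, and $\Omega(\mathcal{C})$ is the other component.

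\emph{The final reduction.} Reducing to Theorem~\ref{sufficient-condition-of-s.m.s.} requires $\mathcal{S}$ itself to contain an object of each Euclidean component. Your argument only places the second component inside $\mathcal{F}(\mathcal{S})$, so that hypothesis is never verified. The paper avoids this by finishing directly. Once every component other than the homogeneous tubes lies in $\mathcal{F}(\mathcal{S})$, every simple module does too. Indeed, the simples form a simple-minded system, so by Theorem~\ref{simple-module-and-n-tube} none of them lies in a homogeneous tube. Since every module is an iterated extension of simples, $\mathcal{F}(\mathcal{S})=A$-$\stmod$.
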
	
\begin{proof}
Since the domestic Brauer graph algebra $A$ is 1-domestic or 2-domestic, $\Gamma_{A}$ contains at most two Euclidean components. It follows from Theorem \ref{sufficient-condition-of-s.m.s.} that the necessary condition holds. We prove the sufficient condition.    By Lemma \ref{omega-self-injective-1}, both $\Omega(\mathcal{S})$ and $\Sigma(\mathcal{S})$ are contained  in  $\mathcal{F}(\mathcal{S})$. Then it is clear that  $\tau(\mathcal{S})=\Omega^{2}(\mathcal{S})$ and $\tau^{-1}(\mathcal{S})=\Sigma^{2}(\mathcal{S})$ are contained in $\mathcal{F}(\mathcal{S})$. Thus  $\tau^{i}(\mathcal{S})$ is contained in $\mathcal{F}(\mathcal{S})$ for every $i\in\mathbb{Z}$.  

If $A$ is 1-domestic, then there is only one Euclidean component in $\Gamma_{A}$.  Since $\mathcal{F}(\mathcal{S})$ contains $\tau^{i}(\mathcal{S})$  for every $i\in\mathbb{Z}$ and $\mathcal{F}(\mathcal{S})$  is closed under extensions, the whole Euclidean component is contained in $\mathcal{F}(\mathcal{S})$. If $A$ is 2-domestic, there are exactly two Euclidean components in $\Gamma_{A}$.  By condition (1), we may take a non-periodic module $M$ in $\mathcal{S}$. We know that $M$ and $\Omega(M)$ are not in the same Euclidean component (please see Remark \ref{Euclidean-comp}). By condition (2), $\Omega(M)$ is in $\mathcal{F}(\mathcal{S})$. Since  $\mathcal{F}(\mathcal{S})$ contains $\tau^{i}(M)$ and  $\tau^{i}(\Omega(M))$ for every $i\in\mathbb{Z}$ and  $\mathcal{F}(\mathcal{S})$ is closed under extensions, the two Euclidean components are contained in $\mathcal{F}(\mathcal{S})$. 

For each irreducible map $X\xrightarrow{\alpha} Y$ with $X$ and $Y$ non-projective and indecomposable in an Euclidean component, we extend $\alpha$ to a  triangle 	\[X\xrightarrow{\alpha}Y\xrightarrow{}Z\xrightarrow{} X[1],\] 
and by Lemma  \ref{two-out-of-three}, $Z$ is in $\mathcal{F}(\mathcal{S})$, by Proposition  \ref{s.m.s.-maximal-universal-module}, $Z$ is a quasi-simple. By \cite[Lemma 3.8]{Z},  all quasi-simples, except homogeneous tubes consisting of band modules,  are in $\mathcal{F}(\mathcal{S})$. Thus $\mathcal{F}(\mathcal{S})$  contains  all connected
AR-components which are not homogeneous tubes.  By Theorem \ref{simple-module-and-n-tube}, there is no 
simple $A$-module  contained in any homogeneous tube. Thus $\mathcal{F}(\mathcal{S})$ contains all simple $A$-modules. Hence $\mathcal{S}$  is a simple-minded system in $A$-$\stmod$.
\end{proof}

\section{An explicit construction of simple-minded systems over 2-domestic Brauer graph algebras}
As an application of Theorem \ref{BGA-sms}, we  construct a class of simple-minded systems over a 2-domestic Brauer graph algebra in this section. 
Let $A$ be a 2-domestic Brauer graph algebra with Brauer graph $G$ with $n$ edges. It follows that  the stable AR-quiver $_{s}\Gamma_{A}$ of $A$  consists of two stable Euclidean components $\Gamma_{1}$ and $\Gamma_{2}$ of the form $\mathbb{Z}\widetilde{A}_{p,q}$, two quasi-tubes of rank $p$ and  two quasi-tubes of rank $q$, as well as infinitely many homogeneous tubes.  
	
\begin{Rem}\label{Euclidean-comp}
We recall some properties of Euclidean components of the AR-quiver $\Gamma_{A}$ as follows {\rm(}please refer to \cite[Lemma 3.5 and Lemma 3.6]{Z}{\rm)}. Two Euclidean components $\Gamma_{1}$ and $\Gamma_{2}$ of AR-quiver $\Gamma_{A}$ are  stable generalized standard,  and the stable components satisfy condition $\Omega(_{s}\Gamma_{1})={_{s}\Gamma_{2}}$. Therefore, every object in $_{s}\Gamma_{1}$ or $_{s}\Gamma_{2}$ is a stable brick in $A$-$\stmod$. Note that, if there is no composition of finitely many irreducible maps from object $M$ to object $N$ in $_{s}\Gamma_{1}${\rm(}resp.  $_{s}\Gamma_{2}${\rm)}, then $\StHom_A(M,N)=0$. 
\end{Rem}

The following theorem plays a role in the construction of simple-minded systems.
\begin{Them}$($\cite[Ch. V, Sec. 2, Cor. 2.4]{ARS}$)$ \label{omega-self-injective}
Let $A$ be an artin algebra and let $C$ be an indecomposable module with $\underline{\End}_{A}(C)$ a division ring. Then the following are equivalent for a short exact sequence $\delta:0\xrightarrow{}\tau C\xrightarrow{}B\xrightarrow{} C\xrightarrow{}0.$
\begin{enumerate}[$(1)$]
		\item $\delta$ is almost split.
		\item  $\delta$ does not split.
		\item $B\ncong \tau C\oplus C$.
\end{enumerate}
\end{Them}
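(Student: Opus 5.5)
The statement is the ARS result recalled as Theorem \ref{omega-self-injective}; I will argue along the lines of the standard proof, where the hypothesis that $\underline{\End}_A(C)$ is a division ring stands in for the local-ring property used in the general theory of almost split sequences. The implication $(1)\Rightarrow(2)$ is immediate from the definition, since almost split sequences are non-split. For $(2)\Rightarrow(3)$, the plan is to show that if $B\cong\tau C\oplus C$ then $\delta$ splits. I would invoke the standard lemma (Miyata's theorem, or a direct length/Fitting argument) that a short exact sequence $0\to X\to X\oplus Z\to Z\to 0$ of finitely generated modules over an artin algebra splits.

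For $(3)\Rightarrow(1)$, and really for $(2)\Rightarrow(1)$, I would use the Auslander--Reiten formula
\[
\mathrm{Ext}^1_A(C,\tau C)\cong \D\,\underline{\End}_A(C).
\]
This makes $\mathrm{Ext}^1_A(C,\tau C)$ a simple module over $\underline{\End}_A(C)$, which is a division ring by hypothesis. It is in fact one-dimensional as a vector space over the division ring $\Gamma=\underline{\End}_A(C)$ acting on the right, or on the left via the $\End(\tau C)$-action. The almost split sequence ending at $C$ corresponds to a nonzero element of the socle of $\mathrm{Ext}^1(C,\tau C)$ as an $\End(C)$-module. Since $\underline{\End}(C)$ is a division ring, the radical of $\End(C)$ acts as zero on $\mathrm{Ext}^1(C,\tau C)$, because maps factoring through projectives kill $\mathrm{Ext}$. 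Hence every nonzero element lies in the socle, and so every non-split $\delta$ is almost split.

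It remains to get from $(3)$ back to $(2)$: if $B\ncong \tau C\oplus C$, then $\delta$ cannot split, since a split sequence has middle term $\tau C\oplus C$. So $(3)\Rightarrow(2)\Rightarrow(1)$, which closes the cycle.

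The main obstacle is the middle step. One must verify carefully that the socle of $\mathrm{Ext}^1_A(C,\tau C)$ as an $\End_A(C)$-module is all of it. I would try first to show that $\mathrm{Ext}^1_A(C,\tau C)$ is a module over $\underline{\End}_A(C)$; this holds because endomorphisms factoring through projectives act trivially on $\mathrm{Ext}^1$. A module over a division ring is semisimple, which gives the claim. Then I would apply the characterization of almost split sequences as those $\delta$ whose class is annihilated by $\rad\End(C)$ and is nonzero, as in \cite{ARS}.
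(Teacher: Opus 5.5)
Your proposal is correct and is essentially the standard argument behind \cite[Ch.~V, Cor.~2.4]{ARS}, which the paper only cites. You get $(2)\Leftrightarrow(3)$ from the splitting criterion for $B\cong\tau C\oplus C$, and $(2)\Rightarrow(1)$ from the socle characterization of almost split sequences in $\mathrm{Ext}^1_A(C,\tau C)$. The one step you should write out is that $\underline{\End}_A(C)$ being a division ring forces $\rad\End_A(C)\subseteq\mathcal{P}(C,C)$, because a surjection onto a division ring kills the Jacobson radical; hence the radical annihilates $\mathrm{Ext}^1_A(C,\tau C)$ and every nonzero class lies in the socle.
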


We shall construct a class of simple-minded systems for the case $p=q$ as follows. 
In this case, two stable Euclidean components $_{s}\Gamma_{1}$ and $_{s}\Gamma_{2}$ are of the form $\mathbb{Z}\widetilde{A}_{p,p}$.  Take a non-zero non-periodic indecomposable object $M$ in $A$-$\stmod$. Without loss of generality, we assume $M$ is in  the stable Euclidean component $_{s}\Gamma_{1}$.  By Remark \ref{Euclidean-comp}, $M$ is a stable brick in $A$-$\stmod$. By Serre duality,
\[\StHom_A(M, M)\cong\D\StHom_A(M,\nu\Omega(M))\cong\D\StHom_A(M,\Omega(M))\ncong 0.\]
Hence there exists a non-zero morphism $\alpha:M\rightarrow \Omega(M)$ in $A$-$\stmod$. Consider  the triangle extended by morphism $\alpha$: 
\begin{equation}\label{seq-2}
M\xrightarrow{\alpha}\Omega{(M)}\xrightarrow{}W\xrightarrow{} M[1].
\end{equation} 
Rotating the triangle (\ref{seq-2}) to the left twice, we have 
\begin{equation}\label{seq-3}
 M[-2]\xrightarrow{}W[-1]\xrightarrow{}M\xrightarrow{\alpha} \Omega{(M)}.
\end{equation} 
Since $M$ is a stable brick and $M[-2]=\Omega^{2}(M)\cong\tau M$, by Theorem 
\ref{omega-self-injective}, the triangle (\ref{seq-3}) is induced by an almost split sequence ending with $M$.
Clearly, $W$ contains two indecomposable non-projective direct summands, denoted  $W_{1 1}$ and $W_{12}$. Note that $W_{1 1}$ and $W_{12}$ are in stable Euclidean component $_{s}\Gamma_{2}$.
Set  $\mathcal{S}_{1}:=\{M,W_{11}, W_{12}\}$.
\smallskip
	
\noindent{\bf Claim 1.}  $\mathcal{S}_{1}$ is an orthogonal system in $A$-$\stmod$. 

Indeed, since the connected component $\Gamma_{1}$  is stable generalized standard and $\Omega(_{s}\Gamma_{1})={_{s}\Gamma_{2}}$, by Remark \ref{Euclidean-comp}, $W_{11}$ and $W_{12}$ are mutually orthogonal in $A$-$\stmod$.  By Serre duality, 
\[\StHom_A(M, W_{11})\cong\D\StHom_A(W_{11},\nu\Omega(M))\cong\D\StHom_A(W_{11},\Omega(M))\cong\D\StHom_A(W_{11}[-1],\Omega^{2}(M)),\] 
\[\StHom_A(W_{11}, M)\cong\D\StHom_A(M,\Omega(W_{11})).\]  
Since $\Gamma_{1}$ is stable generalized standard, it follows that \[\StHom_A(W_{11}[-1],\Omega^{2}(M))\cong\StHom_A(W_{11}[-1],\tau(M))\cong 0\] and \[\StHom_A(M,\Omega(W_{11}))\cong\StHom_A(M,W_{11}[-1])\cong 0.\] 
Note that $W_{11}[-1]$ is a middle term of the almost split sequence induced by triangle (\ref{seq-3}).
Thus $M$ and $W_{11}$ are mutually orthogonal in $A$-$\stmod$. Similarly, $M$ and $W_{12}$ are mutually orthogonal in $A$-$\stmod$.  Hence $\mathcal{S}_{1}$ is an orthogonal system in $A$-$\stmod$. 
	
\medskip
	
Consider  the triangle extended by $\alpha_{1}:W_{11}\rightarrow \Omega(W_{11})$:
\begin{equation}\label{seq-4}
W_{11}\xrightarrow{\alpha_{1}}\Omega{(W_{11})}\xrightarrow{}W_{2}\xrightarrow{} W_{11}[1]. 
\end{equation} 
Rotating triangle (\ref{seq-4}) to the left twice, we have 
\begin{equation}\label{seq-5}
\tau W_{11}\xrightarrow{\alpha}W_{2}[-1]\xrightarrow{}W_{11}\xrightarrow{}\tau W_{11}[1].
\end{equation} 
Since $W_{11}$ is a stable brick and $\Omega^{2}(W_{11})\cong\tau W_{11}$, by Theorem
\ref{omega-self-injective}, the triangle (\ref{seq-5}) is induced by an almost split sequence ended with $W_{11}$.
It is clear that $W_{2}$ contains two indecomposable non-projective direct summands, denoted by $W_{21}$ and $W_{22}$. 
	
\noindent{\bf Claim 2.} One of the objects $W_{21}$  and $W_{22}$ is isomorphic to $M$. 

Since triangle (\ref{seq-3}) is induced  by an almost split sequence, there is an irreducible map from $\tau M$ to $W_{11}[-1]$ in the AR-quiver $\Gamma_{A}$. It follows that there is an irreducible map from $\Omega(M)$ to $W_{11}$. Since $W_{11}$ is a stable brick and $\Omega^{2}(W_{11})\cong\tau W_{11}$, by Theorem 
\ref{omega-self-injective}, the sequence (\ref{seq-5}) is induced by an almost split sequence. Thus $\Omega(M)$ is isomorphic to an indecomposable direct summand of $W_{2}[-1]$, then $M$ is an  indecomposable non-projective direct summands of  $W_{2}$. Hence one of objects  $W_{21}$  and $W_{22}$ is isomorphic to $M$. Without loss of generality, we assume $W_{21}\cong M$. Set $\mathcal{S}_{2}:=\{W_{11}, M, W_{22}\}$.
	
\noindent{\bf Claim 3.}  $\mathcal{S}_{2}$ is an orthogonal system in $A$-$\stmod$. 

Indeed, by Claim 1, $W_{11}$ and  $M$  are orthogonal in $A$-$\stmod$. Since $M$ and $W_{22}$ are middle terms of an almost split sequence, there is no composition of finitely many irreducible maps between $M$ and $W_{22}$. Therefore $M$ and $W_{22}$ are orthogonal in $A$-$\stmod$.  By Serre duality, 
\[\StHom_A(W_{11}, W_{22})\cong\D\StHom_A(W_{22},\nu\Omega(W_{11}))\cong\D \StHom_A(W_{22},\Omega(W_{11}))\cong\D\StHom_A(W_{22}[-1],\Omega^{2}(W_{11})),\] 
\[\StHom_A(W_{22}, W_{11})\cong\D\StHom_A(W_{11},\Omega(W_{22})).\]  
Since $\Gamma_{1}$ is stable generalized standard,  \[\StHom_A(W_{22}[-1],\Omega^{2}(W_{11}))=\StHom_A(W_{22}[-1],\tau(W_{11}))\cong 0\] and \[\StHom_A(W_{11},\Omega(W_{22}))\cong\StHom_A(W_{11},W_{22}[-1])\cong 0.\] Thus $W_{11}$ and $W_{22}$ are mutually orthogonal in $A$-$\stmod$.  Hence $\mathcal{S}_{2}$ is an orthogonal system in $A$-$\stmod$. 

\noindent{\bf Claim 4. }	$\mathcal{S}_{1}$ and $\mathcal{S}_{2}$ are mutually orthogonal in $A$-$\stmod$, that is, $\mathcal{S}_{1}\cup\mathcal{S}_{2}=\{M, W_{11}, W_{12},W_{22}\}$  is an orthogonal system in $A$-$\stmod$.

It suffices to show that $W_{12}$ and $W_{22}$ are mutually orthogonal in $A$-$\stmod$. Indeed, since $\Gamma_{1}$ is stable generalized standard, it follows that 
\[\StHom_A(W_{12}, W_{22})\cong\D\StHom_A(W_{22},\Omega(W_{12}))=0,\] and \[\StHom_A(W_{22}, W_{12})\cong\D\StHom_A(W_{12},\Omega(W_{22}))=0.\]
Thus $\mathcal{S}_{1}\cup\mathcal{S}_{2}$  is an orthogonal system in $A$-$\stmod$. The following diagram provides a visual depiction for the orthogonality of the set $\mathcal{S}_{1}\cup\mathcal{S}_{2}$.

\vspace{0cm}
\[_{s}\Gamma_{1}: \xymatrix@dr@R=17pt@C=17pt@!0{	
	&&\cdots&& \\	
	\scriptstyle \bullet\ar[rr] &&\scriptstyle W_{11}[1]\ar[rr] &&\scriptstyle\bullet & \\
	\cdots\ \ \ \ \ \ \ &&&&&  \\
	&&&& \\
	\scriptstyle W_{11}[-1]\ar[uuu]\ar[rr]^{}&& \boxed{\scriptstyle M\cong W_{21}}\ar[uuu]\ar[rr]&&\scriptstyle W_{12}[1]\ar[uuu] &  \cdots\\
	&&&&& \\
	&&&& \\
	\scriptstyle\tau M \ar[uuu]^{}\ar[rr]_{}&  &\scriptstyle \Omega(W_{12})\ar[uuu]^{}\ar[rr]& & \boxed{\scriptstyle W_{22}}\ar[uuu] \\
&\cdots&&& \\}
{_{s}\Gamma_{2}}: \xymatrix@dr@R=17pt@C=17pt@!0{	
&&\cdots&& \\	
	\scriptstyle \bullet\ar[rr] && \boxed{\scriptstyle W_{11}}\ar[rr] &&\scriptstyle M[1]& \\
	\cdots\ \ \ \ \ \ \ 	&&&&&  \\
	&&&& \\
	\scriptstyle \bullet\ar[uuu]\ar[rr]^{}&& \scriptstyle \Omega(M)\ar[uuu]\ar[rr]&& \boxed{\scriptstyle W_{12}}\ar[uuu] &  \cdots\\
	&&&&& \\
	&&&& \\
	\scriptstyle\bullet \ar[uuu]^{}\ar[rr]_{}&  &\scriptstyle \bullet\ar[uuu]^{}\ar[rr]& &\scriptstyle\ \Omega(W_{22})\ar[uuu] \\
&\cdots&&&	 \\}
\]

Proceeding this process, we have orthogonal systems in $A$-$\stmod$ as follows.  
\[\mathcal{S}_{1}=\{M,W_{11}, W_{12}\}, \mathcal{S}_{2}=\{W_{11}, M, W_{22}\}, \mathcal{S}_{3}=\{W_{12},M, W_{32}\},\]
\[\mathcal{S}_{4}=\{W_{22},W_{11}, W_{42}\}, \mathcal{S}_{5}=\{W_{32},W_{12}, W_{52}\}, \cdots.\] 
Since $p=q$, within a finitely many steps, $W_{(n-3)2}$ coincides with some $W_{i2}$, and  $W_{(n-2)2}$  coincides with some $W_{j2}$ for some $i$ and $j$. We get an orthogonal system in $A$-$\stmod$  as follows.
\begin{equation}\label{sms}
\mathcal{S}=\{M,W_{11},W_{12},W_{22},W_{32},\cdots,W_{(n-2)2}\}.
\end{equation}

Since two stable Euclidean components satisfy the condition $_{s}\Gamma_{1}=\Omega(_{s}\Gamma_{2})$, $\mathcal{S}$ contains at least one object for each Euclidean component. By the triangle (\ref{seq-2}), $\Omega(M)$ is in $(\mathcal{S})_{3}$, thus it is in  $\mathcal{F}(\mathcal{S})$. It follows from the construction of $\mathcal{S}$ that the set $\Omega(\mathcal{S})$ is contained  in $\mathcal{F}(\mathcal{S})$. By Theorem \ref{BGA-sms}, $\mathcal{S}$ is a simple-minded system in $A$-$\stmod$. Note that the cardinality of $\mathcal{S}$ is the number of non-projective simple $A$-modules. Summarizing the above discussion, we have conclusion as follows.

\begin{Them}\label{BGA-sms-con}
Let $A$ be a 2-domestic Brauer algebra with  Brauer graph $G$ such that  $G$ has a unique cycle of even length and the number of {\rm(}additional{\rm)} edges on the inside of the cycle equals to  number of {\rm(}additional{\rm)} edges on the outside of the cycle.  Let $M$ be a non-zero non-periodic indecomposable $A$-module. Then the family  $\mathcal{S}$ of objects  constructed as the above {\rm(}\ref{sms}{\rm)} is a simple-minded system in $A$-$\stmod$.
\end{Them}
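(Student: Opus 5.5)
The plan is to reduce the statement to Theorem \ref{BGA-sms}. Since $\mathcal{S}$ is an orthogonal system, it suffices to check two things: that $\mathcal{S}$ contains a non-periodic module, and that $\Omega(\mathcal{S})\subseteq\mathcal{F}(\mathcal{S})$.

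Condition (1) is immediate. $M\in\mathcal{S}$ is non-periodic by hypothesis and lies in the Euclidean component ${_{s}\Gamma_{1}}$.

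Orthogonality of the whole family (\ref{sms}) comes from the inductive construction. At each step a new member $W_{k2}$ is produced as a summand of the cone of a nonzero map $W\to\Omega(W)$ for an earlier member $W$. The pairwise vanishing of stable Hom spaces is then verified exactly as in Claims 1--4. One combines Serre duality $\StHom_A(X,Y)\cong\D\StHom_A(Y,\Omega X)$ (using $\nu$ acting trivially on these components when $p=q$) with the fact, from Remark \ref{Euclidean-comp}, that $\StHom_A(X,Y)=0$ unless there is a path of irreducible maps from $X$ to $Y$ in the generalized standard component. The members of $\mathcal{S}$ sit in the two components as mesh-incomparable vertices (middle terms of neighbouring almost split sequences, shifted by $\Omega$). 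I would organize this by tracking the positions of the $W_{k2}$ in $\mathbb{Z}\widetilde{A}_{p,p}$ and showing they form an antichain for the path order in each component.

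For condition (2), the key tool is the triangles. From (\ref{seq-2}) we get the triangle $M\to\Omega(M)\to W_{11}\oplus W_{12}\to M[1]$, so $\Omega(M)\in\mathcal{S}\star\{W_{11}\oplus W_{12}\}\subseteq(\mathcal{S})_{3}$. Likewise (\ref{seq-4}) gives $\Omega(W_{11})\in\mathcal{S}\star\{M\oplus W_{22}\}\subseteq\mathcal{F}(\mathcal{S})$. In general, each $S\in\mathcal{S}$ yields a triangle $S\to\Omega(S)\to W\to S[1]$ whose cone $W$ has two indecomposable summands. The inductive construction (Claim 2 and its analogues) shows that both summands lie in $\mathcal{S}$. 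Extension closedness then gives $\Omega(S)\in\mathcal{F}(\mathcal{S})$.

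The main obstacle is the closing of the process. One has to show that, because $p=q$ and the Brauer graph has the stated symmetric shape, the sequence $W_{k2}$ becomes periodic after $n-2$ steps. In particular, the cones for the last members $W_{(n-3)2}$ and $W_{(n-2)2}$ must have both summands already in $\mathcal{S}$, rather than producing new objects or objects that break orthogonality. I would address this by explicitly coordinatizing ${_{s}\Gamma_{1}}\cup{_{s}\Gamma_{2}}$ as $\mathbb{Z}\widetilde{A}_{p,p}$ with $\Omega$ swapping the components and $\tau=\Omega^{2}$. The construction then corresponds to a walk on $\widetilde{A}_{p,p}$ whose closure after $n-2$ steps follows from the symmetric $p=q$ shape. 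Once this is done, Theorem \ref{BGA-sms} gives the result, and the count $|\mathcal{S}|=n$ matches the number of non-projective simples.
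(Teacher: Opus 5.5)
Your proposal follows the paper's proof. You reduce to Theorem~\ref{BGA-sms}, use $M$ for condition (1), and get $\Omega(\mathcal{S})\subseteq\mathcal{F}(\mathcal{S})$ from the triangles $S\to\Omega(S)\to W\to S[1]$ whose cone summands lie in $\mathcal{S}$. The closing-up step, which the paper only asserts from $p=q$, is settled by the coordinatization you propose: in $\mathbb{Z}A_{\infty}^{\infty}$-coordinates the members are the points $(k,-k)$ of ${}_{s}\Gamma_{1}$ and $\Omega(k+1,-k)$ of ${}_{s}\Gamma_{2}$, and these are finitely many modulo the glide $(a,b)\mapsto(a+p,b-q)$ exactly when $p=q$. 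One small correction: $\nu\cong\mathrm{id}$ holds because Brauer graph algebras are symmetric, not because $p=q$.
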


We give an example of the  construction of simple-minded systems over a 2-domestic Brauer graph algebra shown below.
\begin{Ex}\label{BGA-construc-sms}
Consider the Brauer graph  $G=(G_{0},G_{1}, m, o)$ as follows. 
	\[\xymatrix@r@R=26pt@C=26pt@!0{
		*++[o][F]\txt{a} \ar@{-}[rr]^{2}\ar@{-}[dr]^{1}& &*++[o][F]\txt{b}  & & & \\
		&*++[o][F]\txt{e} &  &    & & & \\
		*++[o][F]\txt{c} \ar@{-}[rr]^{4} \ar@{-}[uu]^{3}  & & *++[o][F]\txt{d}\ar@{-}[uu]^{5}\ar@{-}[rr]_{6} &&  *++[o][F] \txt{f}\,.  \\
}\]
We have $m(i)=1$  for each  vertex  $i\in G_{0}$.  The algebra $A\cong kQ_{G}/I_{G}$ determined by Brauer graph $G$ has quiver $ Q_{G}$ as follows {\rm(}relations omitted{\rm)}.
	\[\xymatrix@C=1pc@R=2pc{ 
		&& 3 \ar@<+.5ex>[dll]_{\alpha_{2}} \ar@<+.5ex>[rr]^{\beta_{1}}\save[] \restore &&4 \ar@<+.5ex>[rr]^{\gamma_{2}}&&6\ar@<+.5ex>[dll]^{\gamma_{3}}\save[]	 \restore \\
		1 \ar@<+.5ex>[rr]_{\alpha_{1}} \save[]	\restore && 2 \ar@<+.5ex>[rr]^{\beta_{3}} \ar@<+.5ex>[u]_{\alpha_{3}}  \save[]  \restore&& 5\ar@<+.5ex>[u]_{\gamma_{1}} \ar@<+.5ex>[ll]^{\beta_{2}}\save[]  \restore}\]	
		
We know that $A\cong kQ_{G}/I_{G}=~~\begin{matrix}1\\2\\3\\1\end{matrix}
\!\!\!~\oplus~~\begin{matrix}2\\\begin{matrix}3\\1\end{matrix}~~\begin{matrix}5\end{matrix}\\2\end{matrix}
\!\!\!~\oplus~~\begin{matrix}3\\\begin{matrix}1\\2\end{matrix}~~\begin{matrix}4\end{matrix}\\3\end{matrix}
\!\!\!~\oplus~~\begin{matrix}4\\\begin{matrix}3\end{matrix}~~\begin{matrix}6\\5\end{matrix}\\4\end{matrix}
\!\!\!~\oplus~~\begin{matrix}5\\\begin{matrix}2\end{matrix}~~\begin{matrix}4\\6\end{matrix}\\5\end{matrix}
\!\!\!~\oplus~~\begin{matrix}6\\5\\4\\6\end{matrix}\ .$	
Take $M=\begin{matrix}3\end{matrix}$, the simple module corresponding to vertex $3$ in $Q_{G}$. By the construction,  we obtain six orthogonal triple systems:
	
$\left\{~~\begin{matrix}3\end{matrix},~
\begin{matrix}1\\2\end{matrix},~
\begin{matrix}4\end{matrix}\right\}$,
\ 
$\left\{~~\begin{matrix}1\\2\end{matrix},~
\begin{matrix}3\end{matrix},~
\begin{matrix}2\\\begin{matrix}3\\1\end{matrix}~~\begin{matrix}5\end{matrix}\end{matrix}\right\}$,
\
$\left\{~~\begin{matrix}4\end{matrix},~
\begin{matrix}3\end{matrix},~
\begin{matrix}6\\5\end{matrix}\right\}$,
	\
$\left\{\begin{matrix}2\\\begin{matrix}3\\1\end{matrix}~~\begin{matrix}5\end{matrix}\end{matrix},~
\begin{matrix}1\\2\end{matrix},~
\begin{matrix}5\\\begin{matrix}2\end{matrix}~~\begin{matrix}4\\6\end{matrix}\end{matrix}\right\}$,
\
$\left\{\begin{matrix}6\\5\end{matrix},~
\begin{matrix}5\\\begin{matrix}2\end{matrix}~~\begin{matrix}4\\6\end{matrix}\end{matrix},~
\begin{matrix}4\end{matrix}\right\}$,
	\
$\left\{\begin{matrix}5\\\begin{matrix}2\end{matrix}~~\begin{matrix}4\\6\end{matrix}\end{matrix},~
\begin{matrix}2\\\begin{matrix}3\\1\end{matrix}~~\begin{matrix}5\end{matrix}\end{matrix},~
\begin{matrix}6\\5\end{matrix}
\right\}$.
Then the orthogonal system $\mathcal{S}=\left\{\begin{matrix}3\end{matrix},~\begin{matrix}1\\2\end{matrix},~\begin{matrix}4\end{matrix},~\begin{matrix}5\\\begin{matrix}2\end{matrix}~~\begin{matrix}4\\6\end{matrix}\end{matrix},~\begin{matrix}2\\\begin{matrix}3\\1\end{matrix}~~\begin{matrix}5\end{matrix}\end{matrix},~\begin{matrix}6\\5\end{matrix} \right\}$ is a simple-minded system in $A$-$\stmod$. Note that $\mathcal{S}$ satisfies the condition $\Omega(\mathcal{S})\subseteq\mathcal{F}(\mathcal{S}).$
\end{Ex}


At the end of this paper, we  give an  example satisfying the construction of Theorem  \ref{BGA-sms-con} over a 1-domestic Brauer graph algebra.
\begin{Ex}\label{BGA-construc-sms-1}	
Consider the Brauer graph  $G=(G_{0},G_{1}, m, o)$ shown below. 
\[\xymatrix@r@R=28pt@C=28pt@!0{&*++[o][F]\txt{b} &  &    & \\
*++[o][F]\txt{a}\ar@{-}[ur]^{1}  \ar@{-}[rr]_{3}  & & *++[o][F]\txt{c}\ar@{-}[ul]_{2} \,.  \\
}\]
We have $m(i)=1$  for any  vertex  $i\in G_{0}$.  The algebra $A\cong kQ_{G}/I_{G}$ determined by Brauer graph $G$ has quiver $ Q_{G}$ as follows{\rm(}relation omitted{\rm)}. 
\[\xymatrix@C=1pc@R=2pc{ 
&& 1  \ar@<+.5ex>[drr]^{\alpha_{3}\ \ \ \ } \ar@<+.5ex>[rrrr]^{\beta_{1}}\save[] \restore &&&&2\ar@<+.5ex>[llll]^{\alpha_{1}}\ar@<+.5ex>[dll]^{\beta_{2}}\save[]	 \restore \\
&& && 3\ar@<+.5ex>[urr]^{\ \ \ \alpha_{2}}\ar@<+.5ex>[ull]^{\beta_{3}} \save[]  \restore}\]	
	
$A\cong kQ_{G}/I_{G}=~~\begin{matrix}1\\\begin{matrix}2\end{matrix}~~\begin{matrix}3\end{matrix}\\1\end{matrix}
\!\!\!~\oplus~~\begin{matrix}2\\\begin{matrix}1\end{matrix}~~\begin{matrix}3\end{matrix}\\2\end{matrix}
\!\!\!~\oplus~~\begin{matrix}3\\\begin{matrix}1\end{matrix}~~\begin{matrix}2\end{matrix}\\3\end{matrix}.$		
Take $M=\begin{matrix}1\end{matrix}$, the simple module corresponding to vertex $1$ in $Q_{G}$. The triangles in the construction of simple-minded systems are stated as follows.
\begin{equation}\label{seq-6}
1\xrightarrow{}\ \begin{matrix} \begin{matrix}2\end{matrix}~~\begin{matrix}3\end{matrix}\\1\end{matrix}\xrightarrow{}2 \oplus 3\xrightarrow{}\  \begin{matrix}1\\\begin{matrix}3\end{matrix}~~\begin{matrix}2\end{matrix}\end{matrix},
\end{equation}
\begin{equation}\label{seq-7}
2\xrightarrow{}\ \begin{matrix} \begin{matrix}1\end{matrix}~~\begin{matrix}3\end{matrix}\\2\end{matrix}\xrightarrow{}1 \oplus 3\xrightarrow{}\  \begin{matrix}2\\\begin{matrix}1\end{matrix}~~\begin{matrix}3\end{matrix}\end{matrix},
\end{equation} 
\begin{equation}\label{seq-8}
3\xrightarrow{}\ \begin{matrix} \begin{matrix}1\end{matrix}~~\begin{matrix}2\end{matrix}\\3\end{matrix}\xrightarrow{}1 \oplus 2\xrightarrow{}\  \begin{matrix}3\\\begin{matrix}1\end{matrix}~~\begin{matrix}2\end{matrix}\end{matrix}.	
\end{equation} 
By the construction,  we obtain three orthogonal triple systems: 
\[\{~~\begin{matrix}1\end{matrix},~\begin{matrix}2\end{matrix},~\begin{matrix}3\end{matrix}\},
\{~~\begin{matrix}2\end{matrix},~\begin{matrix}1\end{matrix},~\begin{matrix}3\end{matrix}\},
\{~~\begin{matrix}3\end{matrix},~\begin{matrix}1\end{matrix},~\begin{matrix}2\end{matrix}\},\]
Thus $\mathcal{S}=\{M, W_{11}, W_{12}\}=\{1,2,3\}$ is the set of all simple $A$-modules. It is a simple-minded system in $A$-$\stmod$.	Note that for any non-zero non-periodic indecomposable module $M'$ in $A$-$\stmod$, the set $\mathcal{S}=\{M', W'_{11}, W'_{12}\}$ is a simple-minded system in $A$-$\stmod$.
\end{Ex}	

\section{Declarations}
\subsection*{ Ethical Approval}
This declaration is “not applicable”.
\subsection*{ Funding} 
	
The research work is supported by NSFC (No.12301044).

\subsection*{Availability of data and materials}
Data availability is not applicable to this article as no new data were created or analyzed in this study.

\end{document}